\documentclass{article}
\usepackage{amsmath,amsthm,amsfonts,graphicx}
\usepackage{multirow}
\usepackage{slashbox}
\usepackage{multirow}
\def\smallddots{\mathinner{\raise7pt\hbox{.}\raise4pt\hbox{.}\raise1pt\hbox{.}}}
\def\smallsdots{\mathinner{\raise1pt\hbox{.}\raise4pt\hbox{.}\raise7pt\hbox{.}}}

\DeclareMathOperator{\diag}{diag}

\DeclareMathOperator{\rank}{rank}

\numberwithin{equation}{section}
\numberwithin{table}{section}
\newtheorem{theorem}{Theorem}[section]

\newtheorem{corollary}{Corollary}[section]
\newtheorem{fact}{Fact}[section]

\newtheorem{example}{Example}[section]
\newtheorem{definition}{Definition}[section]

\newtheorem{remark}{Remark}[section]

\setlength{\textwidth}{6.5 in}
\setlength{\textheight}{9.0 in}
\setlength{\oddsidemargin}{0.0 in}
\setlength{\topmargin}{-0.5 in}



\begin{document}
\title{\bf How Bad Are Vandermonde Matrices?}
\author{Victor Y. Pan  \\
Department of Mathematics and Computer Science \\
Lehman College of the City University of New York \\
Bronx, NY 10468 USA \\
and  Ph.D. Programs in Mathematics  and Computer Science \\
The Graduate Center of the City University of New York \\
New York, NY 10036 USA \\
victor.pan@lehman.cuny.edu \\
http://comet.lehman.cuny.edu/vpan/  \\
} 
\date{}

\maketitle

\begin{abstract}  
The work on the estimation of the condition numbers of Van\-der\-monde matrices, motivated by applications to interpolation and quadrature, can be traced back at least to the 1970s. Empirical study has shown consistently that Van\-der\-monde matrices tend to be badly ill-conditioned, with a narrow class of notable exceptions, such as the matrices of the discrete Fourier transform (hereafter referred to as DFT). So far formal support for this empirical observation, however, has been limited to the  matrices defined by the real set of  knots. We prove that, more generally, any Van\-der\-monde matrix of a large size is badly ill-conditioned unless its knots are more or less equally spaced on or about the circle  $C(0,1)=\{x:~|x|=1\}$. The matrices of DFT are  perfectly conditioned, being defined by a cyclic sequence of knots, equally spaced on that circle, but we prove that even a slight modification of the knots  into the so called quasi-cyclic sequence on this circle defines badly ill-conditioned Van\-der\-monde matrices. Likewise we prove that the
half-size leading block of a large DFT matrix is badly ill-conditioned. (This result was motivated by an application to pre-conditioning of an input matrix for Gaussian elimination with no pivoting.) 
Our analysis involves the Ekkart--Young theorem,  the Van\-der\-monde-to-Cauchy transformation of matrix structure, our new inversion formula for
 a Cauchy matrix, and  low-rank approximation of its large submatrices. 
\end{abstract}

\paragraph{Keywords:} 
Van\-der\-monde matrices;
Condition number;
Cauchy matrices;
Cauchy Inversion formulae;
Van\-der\-monde inverse

\paragraph{AMS Subject Classification:}
15A12,  65F35, 47A65, 12Y05


\section{Introduction}\label{s1}


Our main subject, motivated by applications to interpolation  and quadrature,
 is the estimation of the condition number, $\kappa(V_{\bf s})$,
of a nonsingular $n\times n$ Van\-der\-monde matrix, $V_{\bf s}$,
where
\begin{equation}\label{eqvand0}
\kappa(V_{\bf s})=||V_{\bf s}||~||V_{\bf s}^{-1}||,~~~~~~~~~~~
V_{\bf s}=(s_i^{j})_{i,j=0}^{n-1},
\end{equation} 
 ${\bf s}=(s_i)_{i=0}^{n-1}$
denotes the  vector of $n$ distinct knots $s_0,\dots,s_{n-1}$,
and $||M||=||M||_2$ denotes  the spectral norm of a matrix $M$.
First of all, one would like  to know 
whether this number is reasonably bounded or large, that is,
whether the matrix is well- or ill-conditioned.
 
It has been proven in  \cite{GI88}, \cite{G90}, and
 \cite{T94} that the condition number $\kappa(V_{\bf s})$
is exponential in $n$
 if all knots $s_0,s_1,\dots,s_{n-1}$ are real.
This means that such a  matrix 
is badly ill-conditioned
already for moderately large integers $n$,
 but empirically almost all
 Van\-der\-monde matrices of  large or even reasonably large sizes 
 are badly
 ill-conditioned as well, with a narrow class of exceptions,
such as the 
 Van\-der\-monde matrices
defined by  cyclic sequences of knots, equally spaced on the circle
$C(0,1)=\{x:~|x|=1\}$. Such matrices 
are unitary up to scaling by $1/\sqrt n$
and thus are perfectly conditioned, that is, have the 
minimum condition number 1.
A celebrated example is the
matrix
 of the discrete Fourier
transform (DFT),
\begin{equation}\label{eqdft}
\Omega=(\omega_n^{ij})_{i,j=0}^{n-1},~{\rm for}~\omega_q=\exp(2\pi \sqrt{-1}/q)
\end{equation} 
denoting a primitive $q$th root of 1.  

A slight   modification of the knots into 
the so called
quasi-cyclic sequence can make the matrices 
badly ill-conditioned, however.
This has been an 
empirical  observation of \cite[Section IV]{G90},
whose formal support 
 turned out to be elusive.
The critical step was the estimation of the norm of the inverse matrix,
$||V_{\bf s}^{-1}||$.
The known expressions for the entries of this matrix are not explicit enough.
In particular, write
\begin{equation}\label{eqvndinv}
s(x)=\prod_{j=0}^{n-1}(x-s_j),
~s_i(x)=s(x)/((x-s_i)s_j'(s_i))=\sum_{i,j=0}^{n-1}s_{i,j}x^j,
\end{equation} 
and recall \cite[equation (2.3)]{G90} that
\begin{equation}\label{eqnvndinv}
V_{\bf s}^{-1}=(s_{i,j})_{i,j=0}^{n-1},~{\rm and~so}~
||V_{\bf s}^{-1}||\ge \max_{i,j=0}^{n-1}|s_{i,j}|
\end{equation}
(see other expressions 
in \cite{M03}, \cite{SEMC11}, and 
the references therein).   
As Walter Gautschi wrote in
\cite[Section IV]{G90},  
 the known lower and upper bounds on 
the condition number of a
 Van\-der\-monde matrix, "unfortunately, are too
far apart to give much useful information``, 
and so he resorted to 
``computing the condition numbers $\dots$
numerically``. 

Our new lower bounds
enable us to support formally the cited empirical observations
for a very general class of Van\-der\-monde matrices.
Our results (see Sections \ref{scv} and \ref{svndbd}) 
indicate that
{\em the condition number of an $n\times n$
 Van\-der\-monde matrix is 
 exponential in $n$ unless its knots are 
more or less 
equally spaced on or about the unit circle} $C(0,1)$.

Our study covers
 the matrices defined  with the quasi-cyclic sequence of knots
 as a special case
(see Section \ref{skntunit}), and we  
further observe in Section \ref{sgenpv}
 that all $k\times k$ blocks of the $n\times n$
matrix of discrete Fourier transform,
for $n=2k$, 
are badly ill-conditioned as long as the integers $k$ and
$n$ are sufficiently large,
even though the $n\times n$ DFT matrix itself is unitary up to scaling. 


In order 
to prove our lower bounds on the condition number $\kappa(V_{\bf s})$, 
we shift from
 Van\-der\-monde 
matrices
$V_{\bf s}$ of (\ref{eqvand0})
to Cauchy matrices 
\begin{equation}\label{eqcauchy}
C_{\bf s,t}=\Big (\frac{1}{s_i-t_j}\Big )_{i,j=0}^{n-1},
\end{equation}
defined by $2n$ distinct knots  $s_0,\dots,s_{n-1}$; $t_0,\dots,t_{n-1}$.
The matrices of these two classes
are much different, and it is not obvious that their studies  
can be linked to one another. 
Unlike a Van\-der\-monde 
matrix
$V_{\bf s}$, a Cauchy matrix
$C_{\bf s,t}$ has rational entries, 
its every submatrix is a Cauchy matrix as well, and
the  Cauchy structure is invariant in column interchange
as well as in scaling and shifting all entries by the same scalar,
that is, 
$aC_{a{\bf s},a{\bf t}}=C_{\bf s,t}$ for $a\neq 0$ and 
$C_{{\bf s}+a{\bf e},{\bf t}+a{\bf e}}=
C_{\bf s,t}$ for ${\bf e}=(1,\dots,1)^T$.
Moreover, 
some powerful technique has been developed in
the papers \cite{MRT05}, \cite{R06},
 \cite{P15}, \cite{Pa}, \cite{XXG12}, \cite{XXCB14}
for the approximation of
reasonably large submatrices of Cauchy matrices
by lower-rank matrices.

Nevertheless there is 
a simple  Van\-der\-monde--Cauchy link, 
which has become the basis for our progress in 
 this paper as well as in  \cite{P15} and \cite{Pa}.
(The 
 paper \cite{P89/90} has revealed
a more general link among the matrix structures of Van\-der\-monde, Cauchy,
Toeplitz and Hankel  types, based on their displacement
representation. The paper has also pointed out 
 potential algorithmic applications, and 
this work has indeed become the springboard for devising  the
highly efficient algorithms of the subsequent papers
 \cite{GKO95}, \cite{G98}, \cite{MRT05}, \cite{R06},
 \cite{XXG12}, \cite{XXCB14},  \cite{P15}, and \cite{Pa}).)

Restatement of our original task in terms of Cauchy matrices
has  open new opportunities for our study of
Van\-der\-monde matrices,  
and we obtained the desired progress by extending the known formula 
for the Cauchy determinant to the following simple expression for 
all entries of the inverse of a Cauchy
matrix $C_{\bf s,t}$,

\begin{equation}\label{eqij}
(C_{\bf s,t}^{-1})_{i',j'}= 
\frac{(-1)^{n}}{t_{j'}-s_{i'}}s(t_{j'})t(s_{i'})~
{\rm for}~s(x)=\prod_{i=0}^{n-1}(x-s_i)~{\rm of}~(\ref{eqvndinv}),~
~t(x)=\prod_{j=0}^{n-1}(x-t_j),
{\rm and}~{\rm all~pairs}~(i',j'). 
\end{equation}
 We applied this expression in order to prove exponential 
lower bounds on the norms of the inverses 
and the condition numbers of Cauchy matrices
of a large class,
and we extended these bounds to Van\-der\-monde matrices.
We have also deduced alternative exponential 
lower bounds by applying the known techniques 
for low-rank approximation of certain large submatrices of  
   Cauchy matrices.

In Section \ref{scv0}, by applying  the Ekkart--Young theorem,
we more readily prove the
exponential growth of the condition number 
of the Van\-der\-monde 
matrices  of a more narrow class, namely, the ones 
 having at least a single knot 
 outside the circle $C(0,1)$ 
or having many knots inside this circle.
These results provide simple initial insight into the subject.

Otherwise we organize our presentation as follows.
In the next section we  recall some basic definitions and auxiliary results.
In Section \ref{scv} at first 
we deduce our basic lower bounds on 
the condition number $\kappa(V_{\bf s})$
by applying a link 
between
Van\-der\-monde and  Cauchy matrices;
then we informally interpret
these bounds in terms of some restrictions on the
knot set $\{s_0,s_1,\dots,s_{n-1}\}$
of any well-conditioned Van\-der\-monde
matrix. 
In Sections \ref{skntunit} and \ref{sgenpv} we prove 
 exponential growth of the condition number
of 
 Van\-der\-monde matrices, defined by the 
quasi-cyclic sequence of knots and the half-size 
leading blocks of the DFT matrices,
respectively.  In Section \ref{scv1} we
deduce an exponential lower bound on the condition number 
of the Cauchy matrices closely linked to 
Van\-der\-monde matrices. 
In Section \ref{svndbd} we extend this bound to 
an alternative exponential lower bound on 
the condition number $\kappa(V_{\bf s})$.
This bound 
is weaker than the bounds of Section \ref{scv},
for the matrix classes of Sections  \ref{skntunit} and \ref{sgenpv}, 
but its domain of applications is shown more explicitly. 
In Section \ref{ststs} we present the results of our numerical tests. 
Section \ref{sconc} is left for conclusions.


\section{Some Definitions and Auxiliary Results}\label{sdef}


$\sigma_j(M)$ denotes the $j$th largest singular value of an
$n\times n$ matrix $M$.

 $||M||=||M||_2=\sigma_1(M)$ is its spectral norm.

$\kappa(M)=\sigma_1(M)/\sigma_{\rho}(M)$ is its condition number
provided that  $\rho=\rank(M)$. 

$\sigma_{\rho}(M)=||M^{-1}||$ and $\kappa(M)=||M||~||M^{-1}||$
if $\rho=n$, that is, 
if $M$ is a nonsingular matrix.


\begin{theorem}\label{thsgm} {\rm (The Ekkart--Young Theorem.)}
$\sigma_j(M)$ is equal to the distance $||M-M_{j-1}||$
between $M$ and its closest approximation 
by a matrix $M_{j-1}$ of rank at most $j-1$.
\end{theorem}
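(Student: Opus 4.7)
The plan is to prove both inequalities via the singular value decomposition (SVD). Write $M = U\Sigma V^*$, where $U$ and $V$ are $n\times n$ unitary and $\Sigma = \diag(\sigma_1(M),\ldots,\sigma_n(M))$ with $\sigma_1(M) \ge \cdots \ge \sigma_n(M) \ge 0$. I would then verify $\|M-M_{j-1}\| \le \sigma_j(M)$ for some explicit $M_{j-1}$ of rank at most $j-1$, and $\|M-N\| \ge \sigma_j(M)$ for every such $N$, which together yield the claimed equality.

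For the achievability direction, let $\Sigma_{j-1}$ be the diagonal matrix that keeps the first $j-1$ singular values of $M$ and replaces the rest with zeros, and set $M_{j-1} = U\Sigma_{j-1}V^*$. Then $\rank(M_{j-1}) \le j-1$, and by unitary invariance of the spectral norm,
\[ \|M - M_{j-1}\| = \|U(\Sigma - \Sigma_{j-1})V^*\| = \|\Sigma - \Sigma_{j-1}\| = \sigma_j(M). \]

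For the optimality direction, fix any $N$ with $\rank(N) \le j-1$, so that $\ker(N)$ has dimension at least $n-j+1$. Writing $V = [v_1~|~\cdots~|~v_n]$ in terms of its columns, the subspace spanned by $v_1,\ldots,v_j$ has dimension $j$, and since $j + (n-j+1) > n$ the two subspaces must share a unit vector $x = \sum_{i=1}^{j}\alpha_i v_i$ with $\sum_{i=1}^{j}|\alpha_i|^2 = 1$. Because $Nx=0$ and the vectors $Mv_i$ are orthogonal with $\|Mv_i\| = \sigma_i(M)$,
\[ \|M-N\|^2 \ge \|(M-N)x\|^2 = \|Mx\|^2 = \sum_{i=1}^{j}\sigma_i(M)^2 |\alpha_i|^2 \ge \sigma_j(M)^2. \]
The one place where the rank hypothesis on $N$ is used essentially is the dimension-counting step that produces $x$; everything else is routine manipulation with the SVD and with the unitary invariance of the spectral norm, so I do not anticipate a substantial obstacle.
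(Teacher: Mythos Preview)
Your proof is correct and is the standard SVD argument for the Eckart--Young theorem. The paper itself does not prove this statement at all: it is listed among the auxiliary results in Section~\ref{sdef} and simply cited as a known theorem, so there is no proof in the paper to compare against. What you have written is exactly the classical proof one finds in standard references (e.g., Golub--Van Loan), and nothing further is needed here.
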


\begin{corollary}\label{coappnd}   
Suppose that a matrix $B$ has been obtained by appending $k$
new rows or $k$ new columns to a matrix $A$.
Then $\sigma_j(A)\ge \sigma_{j+k}(B)$ for all $j$.
\end{corollary}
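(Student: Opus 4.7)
The plan is to derive Corollary \ref{coappnd} as a direct consequence of Theorem \ref{thsgm} (the Ekkart--Young theorem), by lifting a best low-rank approximation of $A$ to an approximation of $B$ whose rank has grown by at most $k$ and whose approximation error has not grown at all.

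First I would invoke Theorem \ref{thsgm} to fix a matrix $A_{j-1}$ of rank at most $j-1$ such that $\|A-A_{j-1}\|=\sigma_j(A)$. Next, I would build a candidate approximation $\tilde B$ to $B$ by appending to $A_{j-1}$ exactly the same $k$ rows (or columns) that were appended to $A$ to produce $B$. Two features of this construction will drive the argument: (i) the rank of $\tilde B$ is at most $\rank(A_{j-1})+k\le j+k-1$, since each appended row or column can raise the rank by at most one; and (ii) the difference $B-\tilde B$ agrees with $A-A_{j-1}$ on the original block and is zero on the appended block.

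The next step is to observe that padding a matrix with zero rows or zero columns does not alter its spectral norm, an immediate consequence of the variational characterization $\|M\|=\sup_{x\ne 0}\|Mx\|_2/\|x\|_2$: appending zero rows appends zero coordinates to every image vector $Mx$, and appending zero columns extends every admissible input $x$ only by coordinates that contribute nothing to $Mx$. Hence $\|B-\tilde B\|=\|A-A_{j-1}\|=\sigma_j(A)$. Applying Theorem \ref{thsgm} once more, this time to $B$, yields $\sigma_{j+k}(B)\le\|B-\tilde B\|$, because the left-hand side is by definition the distance from $B$ to the nearest matrix of rank at most $j+k-1$, and $\tilde B$ is one such matrix. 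Combining the two inequalities delivers $\sigma_{j+k}(B)\le\sigma_j(A)$, as claimed.

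I do not anticipate a substantive obstacle in this argument. The only point that requires a moment of care is the verification that zero-padding preserves the spectral norm, which is handled by the variational formula above. The remaining work is a bookkeeping exercise that matches the Ekkart--Young characterization of singular values on both sides of the inequality, and it treats the row-append and column-append cases uniformly.
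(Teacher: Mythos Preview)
Your argument is correct and is precisely the derivation the paper has in mind: the corollary is stated without proof immediately after Theorem~\ref{thsgm}, so the intended reasoning is exactly the Ekkart--Young lift you describe---pad a best rank-$(j-1)$ approximation of $A$ by the appended rows or columns, observe that rank grows by at most $k$ while the residual (being zero-padded) keeps the same spectral norm, and conclude $\sigma_{j+k}(B)\le\sigma_j(A)$.
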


\begin{fact}\label{fadft}
For the DFT matrix $\Omega$ 
and $\omega_n$ of (\ref{eqdft}), it holds that 

(i) $||\Omega||=||\Omega^H||=\sqrt n$ and

(ii) $n\Omega^{-1}=\Omega^H=
(\omega_n^{-ij})_{i,j=0}^{n-1}$. 
\end{fact}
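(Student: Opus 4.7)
The plan is to establish (ii) first by direct multiplication and then deduce (i) from the resulting unitarity. First, I would note that $|\omega_n|=1$ implies $\overline{\omega_n^{kj}}=\omega_n^{-kj}$, so the $(i,j)$-entry of $\Omega^H$ equals $\overline{\Omega_{j,i}}=\omega_n^{-ij}$, giving at once the explicit formula for $\Omega^H$ asserted in (ii).

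Next, I would compute the $(i,k)$-entry of $\Omega\Omega^H$ as the geometric sum $\sum_{j=0}^{n-1}\omega_n^{(i-k)j}$. Splitting cases, if $i=k$ every term equals $1$ and the sum is $n$; otherwise $\omega_n^{i-k}$ is an $n$th root of unity distinct from $1$, and the closed-form evaluation of the geometric series combined with $\omega_n^{n(i-k)}=1$ forces the sum to vanish. This shows $\Omega\Omega^H=nI$, which rearranges to $n\Omega^{-1}=\Omega^H$ and completes (ii).

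For (i), I would observe that $\Omega\Omega^H=nI$ means that $\Omega/\sqrt{n}$ is unitary, so all its singular values equal $1$. Scaling back, every singular value of $\Omega$ equals $\sqrt{n}$; in particular $||\Omega||=\sigma_1(\Omega)=\sqrt{n}$, and since the spectral norm is invariant under conjugate transposition, $||\Omega^H||=\sqrt{n}$ as well.

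There is essentially no obstacle here: the statement is a textbook fact about the DFT matrix, and the only subtlety is correctly identifying $\overline{\omega_n^{ji}}$ as $\omega_n^{-ij}$ and then recognizing the sum over $j$ off the diagonal as a vanishing geometric series; once that is in hand, (i) follows from unitarity with no further work.
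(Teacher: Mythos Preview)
Your proof is correct and is the standard argument. The paper itself states this as a ``Fact'' without proof, so there is nothing to compare against; your geometric-series computation of $\Omega\Omega^H=nI$ and the ensuing unitarity argument supply exactly the routine verification the paper omits.
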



\section{The Condition of Van\-der\-monde Matrices: Simple Bounds}\label{scv0}



For a nonsingular Van\-der\-monde matrix $V_{\bf s}$ of (\ref{eqvand0}), write
\begin{equation}\label{eqcond}
 s_+=\max_{i=0}^{n-1} |s_i|,~|V_{\bf s}|=\max\{1,s_+^{n-1}\}.
\end{equation}

Next we prove that
the condition number $\kappa(V_{\bf s})$ is exponential in
$n$ if $s_+\ge \nu>1$ (see Theorem \ref{thnrm}, part (i)) 
or in $k$
if $1/|s_i|\ge \nu>1$
for at least $k$ knots $s_i$, $i=i_1,\dots,i_k$ 
(see Corollary \ref{cocndvceasy}).

\begin{theorem}\label{thnrm} 
For a Van\-der\-monde matrix $V_{\bf s}=(s_i^{j})_{i,j=0}^{n-1}$,  it holds that

(i) $|V_{\bf s}|\le||V_{\bf s}||\le n ~ |V_{\bf s}|$
~~~~and~~~~~
(ii) $||V_{\bf s}||\ge \sqrt n.$
\end{theorem}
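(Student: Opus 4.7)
The plan is to separate the three inequalities and use only elementary bounds between the spectral norm, the Frobenius norm, and the max-entry norm.

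For the lower bound $|V_{\bf s}| \le \|V_{\bf s}\|$ in part (i), I would observe that the spectral norm of any matrix is at least the absolute value of any single entry, so $\|V_{\bf s}\| \ge \max_{i,j}|s_i^j|$. The matrix always contains the entry $1$ (from $j=0$) and contains $s_{i_*}^{n-1}$ where $i_*$ attains $|s_{i_*}| = s_+$. Hence $\max_{i,j}|s_i^j| \ge \max\{1, s_+^{n-1}\} = |V_{\bf s}|$, which gives the desired bound.

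For the upper bound $\|V_{\bf s}\| \le n |V_{\bf s}|$ in part (i), I would go through either the Frobenius norm or the crude bound $\|M\|_2 \le n \max_{i,j}|m_{i,j}|$ (which follows, for instance, from $\|M\|_2 \le \sqrt{\|M\|_1\|M\|_\infty}$ together with the bound $\|M\|_1, \|M\|_\infty \le n \max_{i,j}|m_{i,j}|$). The key observation is that $|s_i^j| \le \max\{1, |s_i|^{n-1}\} \le \max\{1, s_+^{n-1}\} = |V_{\bf s}|$ for all $0\le i,j\le n-1$, so that $\max_{i,j}|s_i^j| \le |V_{\bf s}|$ and the claimed inequality follows.

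For part (ii), I would test $V_{\bf s}$ against the first standard basis vector $e_0=(1,0,\ldots,0)^T$. Since the $0$th column of $V_{\bf s}$ equals $(1,1,\ldots,1)^T$, we have $V_{\bf s}e_0 = (1,\ldots,1)^T$, whose Euclidean norm is $\sqrt n$. Hence $\|V_{\bf s}\| \ge \|V_{\bf s}e_0\|_2/\|e_0\|_2 = \sqrt n$.

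None of these steps presents a genuine obstacle; the whole proof is essentially a bookkeeping exercise that the max absolute entry of $V_{\bf s}$ equals $|V_{\bf s}|$ and that the column of ones already forces $\|V_{\bf s}\|\ge \sqrt n$. The only mild care is in handling the two regimes $s_+\le 1$ and $s_+ > 1$ uniformly, which is exactly why the definition $|V_{\bf s}|=\max\{1,s_+^{n-1}\}$ is taken.
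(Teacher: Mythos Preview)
Your proof is correct and takes essentially the same approach as the paper: the paper simply cites the standard norm inequalities (2.3.8) and (2.3.11) from Golub--Van Loan, which are precisely the bounds $\max_{i,j}|m_{ij}|\le\|M\|_2\le n\max_{i,j}|m_{ij}|$ and $\|M\|_2\ge\|Me_0\|_2$ that you spell out, combined with the observation that the maximum absolute entry of $V_{\bf s}$ equals $|V_{\bf s}|$.
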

\begin{proof}
The theorem readily follows from \cite[equations (2.3.8) and (2.3.11)]{GL13}.
\end{proof}

\begin{theorem}\label{thsmsv}
For a Van\-der\-monde matrix $V_{\bf s}=(s_i^{j})_{i,j=0}^{n-1}$,  it holds that

(i)  $\sigma_n(V_{\bf s})\le \sqrt n$ and

\medskip

(ii)  $\sigma_n(V_{\bf s})\le \nu^{1-k}\sqrt k\max \{k,\nu/(\nu-1)\}$ 
if $1/|s_i|\ge \nu>1$ for 
 $i=0,1,\dots,k-1$, $k\ge 1$.
 
\end{theorem}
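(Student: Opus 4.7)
For both parts I use the variational formula $\sigma_n(V_{\bf s})=\min_{\|x\|=1}\|V_{\bf s}x\|$: it suffices to exhibit a unit vector $x$ whose image under $V_{\bf s}$ has small norm.

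Part (i) is essentially one line. Take $x=e_0$, the first standard basis vector of $\mathbb{C}^n$. Then $V_{\bf s}e_0$ is the first column of $V_{\bf s}$, i.e.\ the all-ones vector of length $n$, whose Euclidean norm is $\sqrt n$. Thus $\sigma_n(V_{\bf s})\le\sqrt n$, with no assumption on the knots whatsoever.

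For part (ii) the test vector must be tailored to the hypothesis $|s_i|\le 1/\nu$ for $i=0,\dots,k-1$. I choose $x$ to be the coefficient vector of $P(z)=z^{k-1}T(z)$ with $T(z)=\prod_{j=k}^{n-1}(z-s_j)$, so $P$ has degree $n-1$ and the $i$-th coordinate of $V_{\bf s}x$ equals $P(s_i)$. By construction the last $n-k$ coordinates of $V_{\bf s}x$ vanish, and each of the first $k$ equals $s_i^{k-1}T(s_i)$, whose prefactor $|s_i|^{k-1}\le\nu^{1-k}$ supplies the decisive exponential decay. Because $z^{k-1}$ is unimodular on $C(0,1)$, $|P(e^{i\theta})|=|T(e^{i\theta})|$ there, and Parseval's identity therefore identifies $\|x\|$ with the coefficient norm $\|t\|$ of $T$. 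All that remains is a bound on $|T(s_i)|$ in terms of $\|t\|$, for which I plan to use the coefficient-level Cauchy--Schwarz estimate
\[
|T(s_i)|^2=\Bigl|\sum_{l=0}^{n-k}t_l\,s_i^l\Bigr|^2\le\|t\|^2\sum_{l=0}^{n-k}|s_i|^{2l}\le\|t\|^2\,\frac{\nu^2}{\nu^2-1},
\]
the last step summing a geometric series using $|s_i|\le 1/\nu$. Combining the $\nu^{1-k}$ prefactor, this bound, and summation over the $k$ nonzero components yields
\[
\sigma_n(V_{\bf s})\le\sqrt k\,\nu^{1-k}\cdot\frac{\nu}{\sqrt{\nu^2-1}}\le\sqrt k\,\nu^{1-k}\cdot\frac{\nu}{\nu-1}\le\sqrt k\,\nu^{1-k}\max\{k,\,\nu/(\nu-1)\},
\]
where the middle inequality uses $\sqrt{\nu^2-1}\ge\nu-1$ and the last is trivial.

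The only real subtlety is the Parseval identification $\|x\|=\|t\|$: one must verify that multiplying $T$ by the unimodular monomial $z^{k-1}$ leaves the coefficient-level $\ell^2$ norm unchanged, which reduces to the observation that the coefficients of $P$ are just those of $T$ shifted by $k-1$ positions. The remaining estimates are entirely routine, so the bulk of the work is in setting up the right test polynomial.
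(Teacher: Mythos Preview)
Your proof is correct. Part (i) is effectively the same argument as the paper's, just phrased through the variational formula $\sigma_n(V_{\bf s})=\min_{\|x\|=1}\|V_{\bf s}x\|$ rather than through the Ekkart--Young theorem: zeroing the first column is the rank-one perturbation $V_{\bf s}e_0e_0^T$, whose norm is exactly $\|V_{\bf s}e_0\|=\sqrt n$.

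For part (ii) your route is genuinely different. The paper again invokes Ekkart--Young: it zeros out the $k\times(n-k+1)$ block consisting of the entries in the first $k$ rows and columns $k-1,\dots,n-1$, observes that the resulting matrix has rank at most $n-1$, and bounds the spectral norm of the deleted block via its $\|\cdot\|_1$ and $\|\cdot\|_\infty$ norms. You instead work on the column side with a single test vector, namely the coefficient vector of $P(z)=z^{k-1}\prod_{j=k}^{n-1}(z-s_j)$, and exploit the identification of $V_{\bf s}x$ with polynomial evaluation. This is arguably the more natural viewpoint for Vandermonde matrices; it avoids any matrix-norm interpolation and even gives the slightly sharper intermediate constant $\nu/\sqrt{\nu^2-1}$ in place of the paper's $\sqrt{\nu/(\nu-1)}$ before both arguments relax to the stated bound $\sqrt{k}\,\nu^{1-k}\max\{k,\nu/(\nu-1)\}$. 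One small remark: the appeal to Parseval and $|P(e^{i\theta})|=|T(e^{i\theta})|$ is unnecessary, since---as you note yourself---the coefficients of $z^{k-1}T(z)$ are just a shift of those of $T$, which immediately gives $\|x\|=\|t\|$.
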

\begin{proof}
At first   turn the matrix $V_{\bf s}$ into a rank deficient matrix
by setting
to 0 the entries of its first column.
 Then note that the norm of this perturbation equals $\sqrt n$ and
 deduce part (i)  by applying Theorem \ref{thsgm}.
Likewise, turn the matrix $V_{\bf s}$ into a rank deficient matrix by setting
to 0 its entries that lie in its first $k$ rows,
but not in its first $k-1$ columns. 
 
This  perturbation,
$\diag(s_i^{k-1})_{i=0}^{k-1}(s_i^j)_{i,j=0}^{k-1,k}$,
has column norm, $||\cdot||_1$, 
at most 
$\nu^{1-k}k$ and
has row norm, $||\cdot||_{\infty}$,  
 at most 
$\nu^{1-k}\sum_{i=k}^n\nu^{k-i}\le 
\frac{\nu^{2-k}}{\nu-1}$. So its spectral norm 
is at most $\nu^{1-k}\sqrt k \max \{k,\nu/(\nu-1)\}$.
Hence Theorem \ref{thsgm} implies part (ii). 
\end{proof}

\begin{corollary}\label{cocndvceasy}
For a Van\-der\-monde matrix $V_{\bf s}=(s_i^{j})_{i,j=0}^{n-1}$,

(i)  $\kappa(V_{\bf s})\ge \max\{1,s_+^{n-1}/\sqrt n\}$
(cf. Table \ref{VandCond}) and 
 
(ii) $\kappa(V_{\bf s})\ge |V_{\bf s}|\nu^{k-1}/(\sqrt k\max \{k,\nu/(\nu-1)\})$
if  $1/|s_i|\ge \nu>1$ for 
 $i=0,1,\dots,k-1$ (cf. Table \ref{VandCond1}).
\end{corollary}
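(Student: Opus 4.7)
The plan is to read the corollary as a direct arithmetic consequence of the two preceding theorems: since $V_{\bf s}$ is nonsingular, $\kappa(V_{\bf s})=\|V_{\bf s}\|/\sigma_n(V_{\bf s})$, so I only need a lower bound on the numerator and an upper bound on the denominator, both of which have just been supplied.

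For part (i), I would combine the left inequality of Theorem \ref{thnrm}(i), namely $\|V_{\bf s}\|\ge |V_{\bf s}|=\max\{1,s_+^{n-1}\}$, with part (i) of Theorem \ref{thsmsv}, $\sigma_n(V_{\bf s})\le\sqrt n$. Dividing yields $\kappa(V_{\bf s})\ge \max\{1,s_+^{n-1}\}/\sqrt n$, and then I would tighten this to the stated bound $\max\{1,s_+^{n-1}/\sqrt n\}$ by using the trivial fact $\kappa(M)\ge 1$ for every nonsingular $M$ (so whichever of $1$ or $s_+^{n-1}/\sqrt n$ is larger serves as a lower bound).

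For part (ii), I would again start from $\|V_{\bf s}\|\ge |V_{\bf s}|$ via Theorem \ref{thnrm}(i), but now divide by the sharper upper bound of Theorem \ref{thsmsv}(ii), $\sigma_n(V_{\bf s})\le \nu^{1-k}\sqrt k\max\{k,\nu/(\nu-1)\}$, which is valid precisely because the hypothesis $1/|s_i|\ge\nu>1$ for $i=0,\dots,k-1$ is the same hypothesis used to prove that part of Theorem \ref{thsmsv}. A single algebraic rearrangement (multiplying top and bottom by $\nu^{k-1}$) then produces the displayed bound $|V_{\bf s}|\nu^{k-1}/(\sqrt k\max\{k,\nu/(\nu-1)\})$.

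There is essentially no obstacle here; the corollary is a bookkeeping step, and the only thing to be careful about is matching hypotheses (for part (ii), the indexing of the $k$ knots with $1/|s_i|\ge\nu$ must coincide with the indexing assumed by Theorem \ref{thsmsv}(ii), which it does). I would close by pointing the reader to Tables \ref{VandCond} and \ref{VandCond1} for the numerical illustration of how rapidly these lower bounds grow with $n$ and $k$ respectively, which is the conceptual payoff of the statement.
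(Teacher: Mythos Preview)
Your proposal is correct and follows exactly the paper's approach: the paper's proof is a one-line ``Combine Theorems \ref{thnrm} and \ref{thsmsv} with equation (\ref{eqcond}),'' and you have simply unpacked that combination explicitly. Your extra remark that $\kappa(V_{\bf s})\ge 1$ is needed to pass from $\max\{1,s_+^{n-1}\}/\sqrt n$ to $\max\{1,\,s_+^{n-1}/\sqrt n\}$ is a nice touch the paper leaves implicit.
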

\begin{proof}
Combine Theorems \ref{thnrm} and \ref{thsmsv} with equation (\ref{eqcond}).
\end{proof}

\begin{remark}\label{renrm}  
Observe that $||V_{\bf s}||_{\infty}= (s_+^n-1)/(s_+-1)$ for $s_+\neq 1$,
and so $||V_{\bf s}||\ge (s_+^n-1)/((s_+-1)\sqrt n)$ 
(cf.  \cite[equation (2.3.7)]{GL13}).
This refines slightly equation (\ref{eqcond}) 
and hence Corollary \ref{cocndvceasy}.
\end{remark}

 Corollary \ref{cocndvceasy}
shows that an $n\times n$ Van\-der\-monde matrix $V_{\bf s}$
is badly ill-conditioned already for a moderately large integer $n$ 
if
$s_+\ge \nu$ or  if
$1/|s_i|\ge \nu$, for a constant $\nu>1$,
 $i=0,1,\dots,k-1$, and a reasonably large integer $k$.
Therefore a Van\-der\-monde matrix $V_{\bf s}$
of a large size is badly ill-conditioned  unless
 all knots $s_0,\dots,s_{n-1}$
lie in or near the disc $\{x:~|x\le 1|\}$
and unless they lie mostly on or near its boundary cirle $C(0,1)$.
In the following  we 
prove 
stronger restrictions  
on  the  knots
of a well-conditioned  Van\-der\-monde matrix.






\section{The Condition Estimates via the Inversion Formulae}\label{scv}


Let $M_{i,j}$ denote the $(i,j)$th minor
of a nonsingular $n\times n$ matrix $M$,
that is, the determinant of its submatrix, 
obtained  by deleting its $i$th row and $j$th column
of the matrix $M$. Then
  \begin{equation}\label{eqadj}
M^{-1}=(-1)^{i+j}(M_{j,i}/\det (M))_{i,j}.
\end{equation}

Next we estimate the norm of the 
inverse of a nonsingular Cauchy matrix
$C_{\bf s,t}$ of (\ref{eqcauchy}),
by combining equation (\ref{eqadj})
with the following well known formula,

\begin{equation}\label{eqdet}
\det (C_{\bf s,t})=\prod_{i<j}(s_j-s_i)(t_i-t_j)/\prod_{i,j}(s_i-t_j).
\end{equation}

Since every square submatrix of a nonsingular Cauchy matrix is  
again a nonsingular Cauchy matrix,  we can extend
equations (\ref{eqdet}) to its minors, combine
these extensions with equation (\ref{eqdet}) itself and with
(\ref{eqadj}), and deduce
(after some simplifications) that 

\begin{equation}\label{eqdet1}
(C_{\bf s,t}^{-1})_{i',j'}=\frac{(-1)^{n^2+i'+j'}}{(s_{i'}-t_{j'})}
\prod_{i}(s_i-t_{i'})\prod_j(s_{j'}-t_j)/\Big (~
\prod_{i<j'}(s_{j'}-s_i)(t_i-t_{j'})
\prod_{i'<j}(s_j-s_{i'})(t_{i'}-t_j)\Big)
\end{equation}
for every pair $(i',j')$.
In particular, note that $n^2+n-1$ is an odd number and obtain
\begin{equation}\label{eqinvc}
(C_{\bf s,t}^{-1})_{n-1,0}= \frac{1}{(t_{0}-s_{n-1})}
\prod_{i}(s_i-t_{0})\prod_j(s_{n-1}-t_j)=
\frac{(-1)^{n}}{(t_{0}-s_{n-1})}s(t_0)t(s_{n-1}),
\end{equation}
for the polynomials $s(x)=\prod_{i=0}^{n-1}(x-s_i)$  
and $t(x)=\prod_{i=0}^{n-1}(x-t_i)$ of (\ref{eqij}).
By re-enumerating the knots $s_i$ and $t_j$, we can turn any pair of them 
into  the pair of $s_{n-1}$ and $t_{0}$
and thus arrive at expression (\ref{eqij}).

We are going to combine this  expression with
the following equation
(cf.  \cite[equation (5)]{P15}), which links 
Van\-der\-monde and Cauchy matrices,

\begin{equation}\label{eqvs}
V_{\bf s}=DC_{\bf s,t}D_1^{-1}V_{\bf t}~{\rm and}~
V_{\bf s}^{-1}=V_{\bf t}^{-1}D_1C_{\bf s,t}^{-1}D^{-1}.
\end{equation}
Here 
\begin{equation}\label{eqDt}
D=\diag(t(s_i))_{i=0}^{n-1},~D_1=\diag(t'(t_j))_{j=0}^{n-1},
\end{equation}
for the polynomial $t(x)=\prod_{i=0}^{n-1}(x-t_i)$ of (\ref{eqij}).

Equations (\ref{eqinvc})--(\ref{eqDt}) combined
can be considered inversion formulae 
for a  nonsingular Van\-der\-monde matrix  $V_{\bf s}$
where $t_0,\dots,t_{n-1}$ are $n$ parameters of our choice.
Let us express these parameters and consequently 
the inverse matrix $V_{\bf s}^{-1}$ through a single
 complex  nonzero parameter $f$. Write
$t_j=f\omega_n^j$, for  $j=0,\dots,n-1$, 
and thus
define the subclass of the {\em CV matrices},
$C_{{\bf s},f}=(\frac{1}{s_i-f\omega_n^j})_{i,j=0}^{n-1}$,
in the class of Cauchy matrices $C_{\bf s,t}$.
Substitute $C_{\bf s,t}=C_{{\bf s},f}$, $V_{\bf t}=V_f=\Omega\diag(f^j)_{j=0}^{n-1}$,  and $t(x)=x^{n}-f^n$
into equation (\ref{eqvs}), and obtain
$$V_{\bf s}=D_fC_{{\bf s},f}D_{1,f}^{-1}\Omega\diag(f^j)_{j=0}^{n-1},$$
for 
$$D_f=
\diag(s_i^n-f^n)_{i=0}^{n-1}~{\rm and}~D_{1,f}=n\diag((f\omega_n^j)^{n-1})_{j=0}^{n-1}.$$

Combine these equations with Fact \ref{fadft},
substitute $\omega_n^{(n-1)j}=\omega_n^{-j}$ for all $j$,
and obtain
\begin{equation}\label{eqvinv}
V_{\bf s}^{-1}=\diag(f^{n-1-j})_{j=0}^{n-1}\Omega^{H}
\diag(\omega_n^{-j})_{j=0}^{n-1} 
C_{{\bf s},f}^{-1}\diag\Big (\frac{1}{s_i^n-f^n}\Big )_{j=0}^{n-1}.
\end{equation}
where the matrix $C_{{\bf s},f}^{-1}$ is defined by equation (\ref{eqdet1})
for $t_j=f\omega_n^j$ and $j=0,\dots,n-1$.

Equations (\ref{eqdet1}), (\ref{eqvinv}), and $t_j=f\omega_n^j$, for $|f|=1$
and $j=0,\dots,n-1$, combined can be considered inversion formulae 
for a nonsingular Van\-der\-monde matrix $V_{\bf s}$
depending on the single nonzero parameter $f$.

Furthermore we can substitute $t(x)=x^n-f^n$ into
equation  (\ref{eqinvc}) and express any entry 
$(C_{\bf s,t}^{-1})_{i',j'}$
of the matrix
$C_{{\bf s},f}^{-1}$ as follows,

\begin{equation}\label{eqinvcf}
(C_{{\bf s},f}^{-1})_{i',j'}=(-1)^n s(t_{j'})(s_{i'}^n-f^n)/(s_{i'}-t_{j'}).
\end{equation}

Equations 
(\ref{eqvinv}) and (\ref{eqinvcf}), combined together 
for all pairs $i'$ and $j'$,
 provide a simplified representation of the inverses 
of CV and Van\-der\-monde matrices. Next we apply this representation
in order to estimate the condition  number of a  Van\-der\-monde matrix. 

\begin{theorem}\label{thninv}
Let $V_{\bf s}$ be a nonsingular $n\times n$ Van\-der\-monde 
matrix of (\ref{eqvand0}).
Fix a scalar $f$ such that $|f|=1$ and 
$s_i\neq f\omega_n^j$ for all pairs of $i$ and $j$,
thus implying that
the CV matrix 
$C_{{\bf s},f}$ is
well defined and nonsingular.
Then  
  $\kappa(V_{\bf s})\ge \sqrt n~||C_{{\bf s},f}^{-1}||/\max_{i=0}^{n-1}|s_i^n-f^n|$.
\end{theorem}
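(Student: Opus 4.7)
The plan is to use the inversion formula (\ref{eqvinv}) directly, which already expresses $V_{\bf s}^{-1}$ as a product of $\Omega^H$, two diagonal unimodular matrices, the CV matrix inverse $C_{{\bf s},f}^{-1}$, and the diagonal matrix $D_f^{-1}=\diag(1/(s_i^n-f^n))_{i=0}^{n-1}$. Every factor except $C_{{\bf s},f}^{-1}$ and $D_f^{-1}$ has a norm that is either $1$ or $\sqrt n$, so obtaining the claimed lower bound is essentially a bookkeeping exercise in the submultiplicativity of the spectral norm.

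First I would record the ingredients. Since $|f|=1$ and $|\omega_n|=1$, the two diagonal factors $\diag(f^{n-1-j})_{j=0}^{n-1}$ and $\diag(\omega_n^{-j})_{j=0}^{n-1}$ in (\ref{eqvinv}) are unitary. By Fact \ref{fadft}, $\Omega^H/\sqrt n$ is unitary and $\|\Omega^H\|=\sqrt n$, so for every $n\times n$ matrix $M$ one has $\|\Omega^H M\|=\sqrt n\,\|M\|$ (multiplication by a unitary matrix preserves the spectral norm). Finally $\|D_f\|=\max_{i=0}^{n-1}|s_i^n-f^n|$, while the nonsingularity hypothesis ensures $D_f$ is invertible.

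Now I would multiply (\ref{eqvinv}) on the right by $D_f$ to eliminate the factor $D_f^{-1}$, obtaining
\[
V_{\bf s}^{-1}D_f \;=\; \diag(f^{n-1-j})_{j=0}^{n-1}\,\Omega^H\,\diag(\omega_n^{-j})_{j=0}^{n-1}\,C_{{\bf s},f}^{-1}.
\]
Taking spectral norms, the two diagonal unitary factors drop out and the $\Omega^H$ factor contributes a factor of $\sqrt n$, yielding
\[
\|V_{\bf s}^{-1}D_f\| \;=\; \sqrt n\,\|C_{{\bf s},f}^{-1}\|.
\]
Submultiplicativity then gives $\sqrt n\,\|C_{{\bf s},f}^{-1}\|\le \|V_{\bf s}^{-1}\|\cdot\|D_f\|$, which rearranges to $\|V_{\bf s}^{-1}\|\ge \sqrt n\,\|C_{{\bf s},f}^{-1}\|/\max_{i}|s_i^n-f^n|$.

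To pass from $\|V_{\bf s}^{-1}\|$ to $\kappa(V_{\bf s})$ it suffices to observe that $\|V_{\bf s}\|\ge 1$, which is immediate from either part of Theorem \ref{thnrm} (part (ii) actually gives the stronger $\|V_{\bf s}\|\ge\sqrt n$, but the stated bound requires only the trivial inequality). Multiplying produces the desired estimate. There is no genuine obstacle in this argument; the only subtlety worth flagging is that the hypothesis $|f|=1$ is used essentially to ensure that the two sandwich diagonal factors around $\Omega^H$ are unitary, so that the chain of norm equalities goes through without loss.
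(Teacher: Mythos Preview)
Your argument is correct and follows essentially the same route as the paper: both proofs start from the inversion formula (\ref{eqvinv}), strip off the unimodular diagonal factors, and use submultiplicativity together with $\|D_f\|=\max_i|s_i^n-f^n|$. The only cosmetic difference is that you extract the factor $\sqrt n$ from $\|\Omega^H\|=\sqrt n$ and then use the trivial bound $\|V_{\bf s}\|\ge 1$, whereas the paper treats $\diag(f^{n-1-j})\,\Omega^H\,\diag(\omega_n^{-j})$ as a single (scaled) unitary factor and recovers the $\sqrt n$ at the end from $\|V_{\bf s}\|\ge\sqrt n$ (Theorem~\ref{thnrm}(ii)).
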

\begin{proof}
Apply equation (\ref{eqvinv}) and note that the matrix 
$\diag(f^{n-1-j})_{j=0}^{n-1}\Omega^{H}
\diag(\omega_n^{-j})_{j=0}^{n-1}$  
is unitary. Therefore
$||V_{\bf s}^{-1}||=
||C_{{\bf s},f}^{-1}\diag\Big (\frac{1}{s_i^n-f^n}\Big )_{j=0}^{n-1}||$.
Hence
$||V_{\bf s}^{-1}||\ge 
||C_{{\bf s},f}^{-1}||/\max_{i=0}^{n-1}|s_i^n-f^n|$.
Combine this estimate with the bound 
$\kappa(V_{\bf s})\ge ||V_{\bf s}^{-1}||\sqrt n$ of Theorem \ref{thnrm}.
\end{proof}

\begin{corollary}\label{coninv0}
Let $V_{\bf s}$ be a nonsingular $n\times n$ Van\-der\-monde matrix.
Then 

 $$\kappa(V_{\bf s})\ge 
\sqrt n~\max_{|f|=1}|s(f)|/\max_{0\le i\le n-1}|s_i-f|~{\rm for}~s(x)~ 
{\rm of~ (\ref{eqij}).}$$ 
\end{corollary}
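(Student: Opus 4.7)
The plan is to chain Theorem \ref{thninv} with an entrywise lower bound on $\|C_{{\bf s},f}^{-1}\|$ extracted from the explicit formula \eqref{eqinvcf}, choosing the entry so that the factor $\max_i |s_i^n-f^n|$ appearing in Theorem \ref{thninv} cancels cleanly.

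First I would fix any $f$ on the unit circle avoiding the finite set of values with $s_i = f\omega_n^j$, so that Theorem \ref{thninv} applies and gives
\begin{equation*}
\kappa(V_{\bf s}) \;\ge\; \frac{\sqrt n\,\|C_{{\bf s},f}^{-1}\|}{\max_{0\le i\le n-1}|s_i^n-f^n|}.
\end{equation*}
Since the spectral norm dominates every entry in absolute value, it is enough to exhibit a single large entry of $C_{{\bf s},f}^{-1}$. Specializing equation \eqref{eqinvcf} to the column $j'=0$ we have $t_{0}=f$, whence $s(t_{0})=s(f)$, so
\begin{equation*}
\bigl|(C_{{\bf s},f}^{-1})_{i',0}\bigr| \;=\; |s(f)|\cdot\frac{|s_{i'}^n-f^n|}{|s_{i'}-f|}
\qquad\text{for every } i'.
\end{equation*}

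Next I would choose $i'=i^*$ with $i^*$ an index at which $|s_{i^*}^n-f^n|=\max_{0\le i\le n-1}|s_i^n-f^n|$. This gives $\|C_{{\bf s},f}^{-1}\|\ge |s(f)|\,\max_i|s_i^n-f^n|/|s_{i^*}-f|$, and substituting into the bound of Theorem \ref{thninv} causes the two $\max_i|s_i^n-f^n|$ factors to cancel exactly, leaving
\begin{equation*}
\kappa(V_{\bf s}) \;\ge\; \frac{\sqrt n\,|s(f)|}{|s_{i^*}-f|}.
\end{equation*}
The final step is the trivial weakening $|s_{i^*}-f|\le \max_{0\le i\le n-1}|s_i-f|$, after which I take the supremum over admissible $f$ on $C(0,1)$ and extend to all $|f|=1$ by continuity (the excluded values form a finite set and both sides depend continuously on $f$).

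I do not anticipate a genuine obstacle here; the only subtlety is the right choice of entry. The inversion formula \eqref{eqinvcf} does the heavy lifting by isolating $s(f)$ as a common factor of the $j'=0$ column, and the particular choice of row $i'=i^*$ is what makes the Theorem \ref{thninv} normalizer disappear instead of degrading the bound.
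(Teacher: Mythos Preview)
Your proposal is correct and follows essentially the same route as the paper: apply Theorem~\ref{thninv}, then bound $\|C_{{\bf s},f}^{-1}\|$ below by the single entry of \eqref{eqinvcf} at $t_{j'}=f$ and at the row $i'$ maximizing $|s_{i'}^n-f^n|$, so that this factor cancels against the denominator in Theorem~\ref{thninv}. Your write-up is in fact more explicit than the paper's about the final weakening $|s_{i^*}-f|\le\max_i|s_i-f|$ and about the continuity/perturbation step extending the bound to all $|f|=1$.
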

\begin{proof}
An infinitesimal perturbation of the parameter $f$ 
can ensure that the matrix $C_{{\bf s},f}$ is well defined,
and so we can apply Theorem \ref{thninv}.
Substitute (\ref{eqinvcf}) for $t_{j'}=f$ and for $i'$ that maximizes 
the value $|s_{i'}^n-f^n|$. 
\end{proof}

\begin{corollary}\label{coninv}
Assume  a nonsingular $n\times n$ Van\-der\-monde matrix  $V_{\bf s}$
with $s_+\le 1$ for $s_+$ of Corollary \ref{cocndvceasy}.
Write $v_i=s(\omega_{n}^i)$, $i=0,\dots,n$, and ${\bf v}=(v_i)_{s=0}^n$.
Then

$\kappa(V_{\bf s})\ge\sqrt n~\max_{|f|=1}|s(f)|/2\ge ||{\bf v}||/2.$
\end{corollary}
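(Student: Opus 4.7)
The plan is to obtain both inequalities as short consequences of Corollary~\ref{coninv0}, the unit-disk hypothesis $s_+\le 1$, and the elementary estimate $\|x\|_2\le \sqrt{n}\,\|x\|_\infty$ for $n$-vectors.

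First I would establish the leftmost inequality. Corollary~\ref{coninv0} already gives
$\kappa(V_{\bf s})\ge \sqrt{n}\,\max_{|f|=1}|s(f)|/\max_{0\le i\le n-1}|s_i-f|$,
so it suffices to bound the denominator above by $2$. Since $|s_i|\le s_+\le 1$ and $|f|=1$, the triangle inequality yields $|s_i-f|\le |s_i|+|f|\le s_++1\le 2$ uniformly in $i$, and this bound survives the maximum over $i$. Substituting it into Corollary~\ref{coninv0} delivers $\kappa(V_{\bf s})\ge \sqrt{n}\,\max_{|f|=1}|s(f)|/2$.

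For the rightmost inequality I would bound the Euclidean norm of $\mathbf{v}$ by the sup-norm of $s$ on $C(0,1)$. Set $M:=\max_{|f|=1}|s(f)|$. Each $\omega_n^i$ lies on $C(0,1)$, so $|v_i|=|s(\omega_n^i)|\le M$. Squaring and summing over the $n$ distinct roots of unity and taking a square root produces $\|\mathbf{v}\|\le \sqrt{n}\,M$, which is precisely the second claimed inequality after dividing by $2$.

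Neither step presents a genuine obstacle: both reduce to the triangle inequality and to $\|x\|_2\le \sqrt{n}\,\|x\|_\infty$. The only point requiring care is the indexing of $\mathbf{v}=(v_i)_{i=0}^n$: since $\omega_n^n=\omega_n^0$ forces $v_n=v_0$, the vector carries only $n$ genuinely distinct sample values, and counting these once is exactly what is needed to pair the factor $\sqrt{n}$ on the left with the Euclidean norm of $\mathbf{v}$ on the right.
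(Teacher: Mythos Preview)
Your proof is correct and follows essentially the same route as the paper: bound $|s_i-f|\le 2$ via the triangle inequality to reduce Corollary~\ref{coninv0} to the first claimed bound, then use $\|{\bf v}\|_\infty\ge \|{\bf v}\|/\sqrt n$ (equivalently your $\|{\bf v}\|\le\sqrt n\,M$) for the second. Your closing remark on the $v_n=v_0$ redundancy is more explicit than the paper, which silently treats ${\bf v}$ as having $n$ effective entries.
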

\begin{proof}
Note that $|s_i-f|\le 2$ for $|s_i|\le s_+\le 1$
and that $\max_{|f|=1}|s(f)|\ge ||{\bf v}||_{\infty}\ge ||{\bf v}||/\sqrt n$.
\end{proof}

Corollary \ref{coninv} suggests that 
a Van\-der\-monde matrix
$V_{\bf s}$ of a large size is badly ill-conditioned 
if the knot set $\{s_0,\dots,s_{n-1}\}$ is sparse  
in some disc that covers a reasonably thick neighborhood of 
a  reasonably long arc of the circle $C(0,1)$. 
Thus, informally, the knots of a 
well-conditioned Van\-der\-monde matrix must be
more or less equally spaced on or about  
the circle $C(0,1)$.

Towards a more formal support of the latter informal claim, one 
should estimate the growth of the coefficients 
of the polynomial $s(x)$ when the knots $s_i$
deviate from such a distribution. 
We estimate this growth in some 
special cases in the next two sections.
Then, based on  
part (i) of Theorem \ref{thninv}
and low-rank approximation of CV matrices,
 we prove
rather general results of this kind,
although only for the inputs of large sizes.


We conclude this section by estimating the condition number 
$\kappa(V_{\bf s})$ in terms of the coefficients of
the polynomial $s(x)$ rather than its values.

\begin{corollary}\label{coninv2}
Under the assumptions of Corollary \ref{coninv},
write
$s(x)=x^n+\sum_{i=0}^{n} f_ix^i$, $f_n=1$, and ${\bf f}=(f_i)_{i=0}^n$. Then
$\kappa(V_{\bf s})\ge 0.5 ||{\bf f}||\sqrt {n+1}$.
\end{corollary}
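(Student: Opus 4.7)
The plan is to recognize the vector ${\bf v}$ of Corollary \ref{coninv} as a DFT image of the coefficient vector ${\bf f}$, apply the unitary property of the DFT from Fact \ref{fadft}(i) to convert norms, and combine with the lower bound $\kappa(V_{\bf s})\ge \|{\bf v}\|/2$ already established in Corollary \ref{coninv}. The whole argument should be one line of computation once the identification is made.

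First I would write out coordinates: since $s(x)=\sum_{i=0}^n f_ix^i$ is a polynomial of degree $n$ with coefficient vector ${\bf f}=(f_i)_{i=0}^n$ of length $n+1$, reading the corollary with evaluation nodes $\omega_{n+1}^i$ (the $(n+1)$th roots of unity, the natural choice for a degree-$n$ polynomial) gives $v_i=s(\omega_{n+1}^i)=\sum_{j=0}^n f_j\,\omega_{n+1}^{ij}$ for $i=0,\dots,n$. That is, ${\bf v}=\Omega_{n+1}{\bf f}$, where $\Omega_{n+1}$ is the $(n+1)\times(n+1)$ DFT matrix.

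Next I would invoke Fact \ref{fadft}(i) applied to $\Omega_{n+1}$: the matrix $\Omega_{n+1}/\sqrt{n+1}$ is unitary, so multiplication by $\Omega_{n+1}$ scales Euclidean norms by exactly $\sqrt{n+1}$. This yields the Parseval-type identity $\|{\bf v}\|=\sqrt{n+1}\,\|{\bf f}\|$. Substituting into $\kappa(V_{\bf s})\ge \|{\bf v}\|/2$ from Corollary \ref{coninv} delivers $\kappa(V_{\bf s})\ge 0.5\,\|{\bf f}\|\sqrt{n+1}$ directly.

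The main (and really only) obstacle is a notational one: the statement of Corollary \ref{coninv} uses $\omega_n^i$ while indexing $i=0,\dots,n$, which is internally inconsistent because $\omega_n^n=\omega_n^0$. Reading it with $\omega_{n+1}^i$, or equivalently using $n+1$ evaluation nodes equally spaced on $C(0,1)$, is forced by the dimension count and makes the DFT identification work cleanly. Once that reading is adopted, no further estimates are needed beyond Parseval and Corollary \ref{coninv}.
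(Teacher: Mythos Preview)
Your argument is correct and matches the paper's own proof essentially line for line: the paper also writes ${\bf v}=\Omega_{n+1}{\bf f}$, invokes the unitarity of $\frac{1}{\sqrt{n+1}}\Omega_{n+1}^H$ to obtain $\|{\bf v}\|=\sqrt{n+1}\,\|{\bf f}\|$, and substitutes into Corollary~\ref{coninv}. Your diagnosis of the $\omega_n$ versus $\omega_{n+1}$ typo in the statement of Corollary~\ref{coninv} is also confirmed by the paper's proof, which uses $\Omega_{n+1}$ explicitly.
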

\begin{proof}
Note that 
${\bf v}=\Omega_{n+1} {\bf f}$ for 
the vector ${\bf v}$ of Corollary \ref{coninv} and 
$\Omega_{n+1}=(\omega_{n+1})_{i=0}^{n}$.
Therefore 
${\bf f}=\Omega_{n+1}^{-1} {\bf v}=\frac{1}{n+1}\Omega_{n+1}^H{\bf v}$.
Consequently
$||{\bf v}||=||{\bf f}||\sqrt {n+1}$ because
$\frac{1}{\sqrt {n+1}}\Omega_{n+1}^H$ is a unitary matrix.
Substitute this equation into   Corollary \ref{coninv}.
\end{proof}


For a fixed set $\{s_0,\dots,s_{n-1}\}$,
one can compute the coefficients $\sigma_0,\dots,\sigma_{n-1}$ of $s(x)$
by using $O(n\log^2(n))$ arithmetic operations (cf. \cite[Section 2.4]{P01}),
and then bound the condition number $\kappa(V_{\bf s})$ by
applying Corollary \ref{coninv2}.


\section{The Case of the Quasi-Cyclic Sequence of Knots}
\label{skntunit}


If  all the knots $s_0,\dots,s_{n-1}$
  
(a)  lie on the unit circle $C(0,1)$
and 

(b) are equally spaced on it, \\
then
we arrive at perfectly conditioned Van\-der\-monde matrices, such as
the DFT matrix $\Omega$, satisfying
$\kappa(\Omega)=1$. 
A rather minor deviations from assumption (b), however, can
make the matrix ill-conditioned, as \cite{G90} has shown empirically
and we deduce formally from Theorem \ref{thninv} next.


\begin{example}\label{ex2} {\em The quasi-cyclic sequence of knots.}

For applications to interpolation and quadrature, one seeks
 a sequence of $n\times n$ well 
conditioned Van\-der\-monde matrices for $n=2,3,\dots$,
such that all the knots $s_i$ are reused recursively
 as $n$ increases (cf. \cite[Section IV]{G90}).
For $n=2^k$, we choose the $n\times n$ matrix of DFT at $n$ points,
defined by the equally spaced knots $s_i=\exp(2\pi i\sqrt {-1}/2^k)$ for
$i=0,1,\dots,2^k-1$.  We can keep these knots for 
all $n>2^k$ as well, but then how  should we choose
the remaining knots $s_i$ if
$n$ is not a power of 2?    
A rather straightforward choice, 
called the quasi-cyclic sequence in \cite[Section IV]{G90},
is the sequence  $s_i=\exp(2\pi f_i\sqrt {-1})$, $i=0,1,\dots,n-1$,
 defined by the following sequence   of fractions $f_i$,

    $$0, \tfrac{1}{2}, \tfrac{1}{4}, \tfrac{3}{4}, \tfrac{1}{8}, \tfrac{3}{8}, \tfrac{5}{8}, \tfrac{7}{8}, \tfrac{1}{16}, \tfrac{3}{16}, \tfrac{5}{16}, \tfrac{7}{16}, \tfrac{9}{16}, \tfrac{11}{16}, \tfrac{13}{16}, \tfrac{15}{16}, \ldots$$ 
In other words, one can use the DFT matrix for $n=2^k$
and positive integers $k$,
and then, as $n$ increases, can extend
 the set $s_0,\dots,s_{2^k-1}$ of the $2^k$-th roots of 1
recursively by adding, one by one, 
the remaining  $2^{k+1}$-st roots of 1 
 in counter-clockwise order.
For $n=2^k$, these are the unitary matrices of DFT,
but for $n=3\cdot 2^k$ and  $k\ge 4$, their condition numbers
become large, exceeding  
$1,000,000$  already for $n=48$ (cf.
\cite[Section IV]{G90}). 
This has been remaining an empirical observation since 1990, 
but next we  
deduce 
from Corollary \ref{coninv} 
that $\log(\kappa(V_{\bf s}))$ grows at least proportionally to $2^k$,
that is,  $\kappa(V_{\bf s})$ grows exponentially in $n$. 
\end{example}


\begin{theorem}\label{thqsc} 
Let $V_q$ denote the $n\times n$ Van\-der\-monde matrix
defined by the quasi-cyclic sequence of knots on the unit circle  $C(0,1)$
for $n=3q$, $q=2^{k-1}$,   
and a positive integer $k$. Then $\kappa(V_q)\ge 2^{q/2}\sqrt n$.
\end{theorem}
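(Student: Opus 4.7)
The plan is to apply Corollary~\ref{coninv}---which gives $\kappa(V_q) \geq \sqrt{n}\,\max_{|f|=1}|s(f)|/2$, valid here because every knot lies on $C(0,1)$ so $s_+ = 1$---by exhibiting a single $f$ with $|s(f)|\geq 2^{q/2+1}$. The starting point is that the quasi-cyclic knot set has a very clean structure: the first $2q$ knots are the $2q$-th roots of unity, and the remaining $q$ knots are $\omega_{4q}^{2i+1}$ for $i=0,1,\ldots,q-1$. Hence
\[
s(x) = (x^{2q} - 1)\,P(x),\qquad P(x) = \prod_{i=0}^{q-1}\bigl(x - \omega_{4q}^{2i+1}\bigr),
\]
and the $q$ roots of $P$ all lie in the open upper half of $C(0,1)$.

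The idea is to pick $f$ as a $4q$-th root of $-1$ near the bottom of $C(0,1)$, namely $f = \omega_{4q}^{2J+1}$ with $J = \lfloor(3q-1)/2\rfloor$. A short computation shows $f^{2q} = -1$, so $|f^{2q}-1| = 2$. The substantive remaining step is to bound $|P(f)|$ from below by $2^{q/2}$. Using $|e^{i\alpha}-e^{i\beta}| = 2|\sin((\alpha-\beta)/2)|$ on the $q$ factors of $P(f)$,
\[
|P(f)| \;=\; 2^q\prod_{i=0}^{q-1}\Bigl|\sin\!\Bigl(\tfrac{\pi(J-i)}{2q}\Bigr)\Bigr|.
\]
The heart of the argument is verifying that every angle $\pi(J-i)/(2q)$ for $i=0,\ldots,q-1$ lies in $[\pi/4,\,3\pi/4]$, on which $|\sin|\geq \sqrt{2}/2$: for even $q$ the angles sweep out $[\pi/4,\,3\pi/4-\pi/(2q)]$, and in the single odd case $q=1$ the lone angle is $\pi/2$. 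The product is then at least $(\sqrt{2}/2)^q = 2^{-q/2}$, so $|P(f)|\geq 2^{q/2}$.

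Combining the two factors yields $|s(f)|\geq 2 \cdot 2^{q/2} = 2^{q/2+1}$, and Corollary~\ref{coninv} delivers $\kappa(V_q)\geq \sqrt{n}\cdot 2^{q/2}$, as required. The only delicate point is the angle-containment check, which I expect to split cleanly along the parity of $q$ and to be elementary in each case; no deeper obstacle is anticipated, precisely because the quasi-cyclic construction produces the tidy factorization $s = (x^{2q}-1)P$, so that the sine product has all its arguments lying in an arithmetic progression of total length below $\pi/2$ that can be centered away from the zeros of $\sin$.
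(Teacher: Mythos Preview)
Your proof is correct and follows essentially the same approach as the paper: apply Corollary~\ref{coninv}, factor $s(x)=(x^{2q}-1)p_q(x)$, and choose an $f$ on the lower half of $C(0,1)$ with $f^{2q}=-1$ so that every factor $|f-\omega_{4q}^{2i+1}|\ge\sqrt{2}$. The paper picks $f=-\sqrt{-1}\,\omega_{4q}$ and asserts the $\sqrt{2}$ bound directly, whereas you choose $f=\omega_{4q}^{2J+1}$ with $J=\lfloor(3q-1)/2\rfloor$ and verify the same bound via the sine identity; your version has the minor advantage of covering the edge case $q=1$ cleanly.
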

\begin{proof}
For  $n=3q$,
the sequence has the $2q$ knots $s_i=\omega_{2q}^i$, $i=0,1,\dots,2q-1$,
 equally spaced on the circle $C(0,1)$, 
and the $q$ additional knots $s_i=\omega_{4q}^{2i+1}$, $i=0,1,\dots,q-1$,
equally spaced on the upper half of that circle 
such that $s(x)=(x^{2q}-1)p_q(x)$, for $p_q(x)=\prod_{i=0}^{q-1}(x-\omega_{4q}^{2i+1})$.  
Corollary \ref{coninv} implies that  
$\kappa(V_q)\ge |s(f)|\sqrt n/2$, where we  can choose any complex $f$
such that $|f|=1$. 

Choose  $f=-\sqrt {-1}~\omega_{4q}$ and then observe that
 $f^{2q}-1=-2$, and so in this case 
\begin{equation}\label{eqpqf}
|s(f)|=2|p_q(f)|.
\end{equation}
Furthermore,
$|f-\omega_{4q}^{2i+1}|\ge \sqrt 2$,
for $i=1,\dots,q$. Consequently, 
$|s(f)|\ge 2^{q/2+1}$, because
 \begin{equation}\label{eqspqf}
 \kappa(V_q)\ge 0.5 |s(f)| \sqrt n,
\end{equation}
and the theorem follows. 
\end{proof}


The theorem already implies that the condition number
$\kappa(V_k)$ grows exponentially in $n$ for $n=3q$ and $q= 2^{k-1}$,
but we can strengthen the estimate a little further
because of the following observation.
\begin{fact}\label{faqcs}
$|f-\omega_{4q}^{2i+1}|\ge 2\cos ((0.5-i/q)\pi/2)$ for
$q$ being a power of two and 
$i$ in the range between $0$ and $q$.
\end{fact}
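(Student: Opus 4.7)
The plan is to recognize the claim as essentially a chord-length computation on the unit circle $C(0,1)$, so it amounts to a short trigonometric identity rather than a genuine estimate.

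First I would rewrite the two points in polar form. Since $\omega_{4q} = \exp(2\pi\sqrt{-1}/(4q))$, one has $f = -\sqrt{-1}\,\omega_{4q} = \exp\!\bigl(\sqrt{-1}(\pi/(2q) - \pi/2)\bigr)$, while $\omega_{4q}^{2i+1} = \exp\!\bigl(\sqrt{-1}(2i+1)\pi/(2q)\bigr)$. Both lie on $C(0,1)$, and the angular separation between them simplifies cleanly to
\[
\Delta\theta \;=\; \frac{(2i+1)\pi}{2q} - \frac{\pi}{2q} + \frac{\pi}{2} \;=\; \frac{i\pi}{q} + \frac{\pi}{2}.
\]

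Next I would apply the standard chord-length formula for points on the unit circle, $|e^{\sqrt{-1}\alpha} - e^{\sqrt{-1}\beta}| = 2|\sin((\alpha-\beta)/2)|$, to obtain
\[
|f - \omega_{4q}^{2i+1}| \;=\; 2\,\bigl|\sin\bigl(i\pi/(2q) + \pi/4\bigr)\bigr|.
\]
Using the identity $\sin(x+\pi/4)=\cos(\pi/4-x)$, this becomes $2\,|\cos(\pi/4 - i\pi/(2q))| = 2\,|\cos((0.5-i/q)\pi/2)|$.

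Finally I would check the sign. For $i$ ranging between $0$ and $q$, the quantity $(0.5-i/q)\pi/2$ lies in $[-\pi/4,\pi/4]$, where cosine is nonnegative, so the absolute value may be dropped and in fact the inequality in the statement holds with equality. The fact that $q$ is a power of two plays no role in the computation itself (it is inherited from the surrounding context of Theorem~\ref{thqsc}). There is no real obstacle here; the only thing to watch is bookkeeping of the arguments so that the halving in the chord-length formula and the $\pi/4$ shift from $f = -\sqrt{-1}\,\omega_{4q}$ combine correctly.
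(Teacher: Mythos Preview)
Your computation is correct, and in fact you prove more than the paper asserts: the stated inequality is actually an equality, as you observe. The paper offers no proof of this Fact at all---it is simply stated and then used in the subsequent discussion---so there is nothing to compare against; your direct chord-length calculation is exactly the natural argument, and your remark that the hypothesis ``$q$ is a power of two'' is inessential here is also correct.
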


This fact implies that
the distance $|f-\omega_{4q}^{2i+1}|$ grows
from $\sqrt 2$ to 2 as 
$i$ grows  from 0 to $q/2$,
but this  distance  decreases back to $\sqrt 2$ as 
$i$ grows  from $q/2$
  to $q$. 
In the limit, as $q\rightarrow \infty$, we obtain 
\begin{equation}\label{eqqcslim}
|p_q(f)|\ge \exp(y)~{\rm for}~y=
\int_0^{q}\ln(2\cos((0.5-x/q)\pi/2))~{\rm d}x.
\end{equation}

For large integers $q$ this expression closely approximates lower bounds,
which we can obtain  for any $q$ by replacing the integral with a 
partial sum. Let us do this
for $q$ divisible by 12.
Replace the integral with a partial sum 
by applying Fact \ref{faqcs} 
for $i=q/6$ and $2\cos ((0.5-i/q)\pi/2)=\sqrt 3$
and obtain
\begin{equation}\label{eqqcs}
 |p_q(f)|\ge  18^{q/6}.
\end{equation}
Refine this estimate by applying  
 Fact \ref{faqcs} 
for $i=q/3$ and $2\cos ((0.5-i/q) \pi/2)\approx 1.9318517$
and deduce that
\begin{equation}\label{eqqcs1}
 |p_q(f)|\ge (2\cos (\pi/12) \sqrt 6)^{q/3}.
\end{equation}

For $n=48$ the lower bound of Theorem \ref{thqsc} on $\kappa(V_q)$ is just 
$2^8\sqrt {48}=1024\sqrt 3>1,773.6$, but  (\ref{eqqcs}) and 
(\ref{eqqcs1}) enable us to
increase this lower bound to 15,417  and 27,598, respectively. 
By engaging Fact  \ref{faqcs} at first for  $i=q/4$ and 
$2\cos ((0.5-i/q) \pi/2)\approx 1.9615706$
and then 
for  $i=q/12$ and $2\cos ((0.5-i/q) \pi/2)\approx 1.586707$,
we increase these lower bounds at first to 38,453
and then to 71,174.
We leave further refinement of the bounds as an exercise for the reader. 

Can one still define a sequence of well-conditioned $h\times h$ Van\-der\-monde matrices 
for $h=1,\dots,n$, which would include all the $2^k$th roots of 1 
for $k=\lfloor\log_2 n\rfloor$
and would consists of only $n$ knots overall?
 \cite[Section 4]{G90} proves that such  a sequence of well-conditioned $h\times h$ Van\-der\-monde matrices is obtained if we  define
the knots $s_i=2\pi f_i\sqrt {-1}$, $i=0,1,\dots,n-1$,
 by  the following {\em van der Corput sequence} of fractions $f_i$,
which  reorders the knots of quasi-cyclic sequence in
a zigzag manner,
    $$0, \tfrac{1}{2}, \tfrac{1}{4}, \tfrac{3}{4}, \tfrac{1}{8}, \tfrac{5}{8}, \tfrac{3}{8}, \tfrac{7}{8}, \tfrac{1}{16}, \tfrac{9}{16}, \tfrac{5}{16}, \tfrac{13}{16}, \tfrac{3}{16}, \tfrac{11}{16}, \tfrac{7}{16}, \tfrac{15}{16}, \ldots.$$ 
 
The knots $s_1,\dots,s_n$ are equally spaced on the  unit circle $C(0,1)$
  for $n=2^k$ being the powers of 2 and are distributed on it 
quite evenly for all $n$. Gautschi  has proved the following estimate
in  \cite[Section 4]{G90}.

\begin{theorem}\label{thgaut}
Given the van der Corput sequence  of $n\times n$
Van\-der\-monde matrices $V_{\bf s}$ for $n=1,2,3,\dots$, it holds that
 $\kappa(V_{\bf s})<\sqrt {2n}$ for all $n$.  
\end{theorem}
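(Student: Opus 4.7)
The plan is to analyse the Hermitian Gram matrix $G = V_{\bf s}^H V_{\bf s}$, whose eigenvalues are the squared singular values of $V_{\bf s}$. Because all knots lie on $C(0,1)$, one has $\bar s_i = s_i^{-1}$, so $G$ is Hermitian Toeplitz with entries $G_{jk} = \sigma_{k-j}$, where $\sigma_m = \sum_{i=0}^{n-1} s_i^m$ is the $m$-th power sum of the knots. Every diagonal entry equals $n$, so $\mathrm{tr}(G) = n^2$ and the eigenvalues of $G$ average to $n$. The target $\kappa(V_{\bf s}) < \sqrt{2n}$ is equivalent to $\lambda_{\max}(G)/\lambda_{\min}(G) < 2n$; in other words, the eigenvalues of $G$ must be shown to stay close enough to their common mean $n$.

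The structural input that makes this bound accessible is the block decomposition of the van der Corput prefix. Writing $n$ in binary as $n = 2^{k_1} + 2^{k_2} + \cdots + 2^{k_r}$ with $k_1 > k_2 > \cdots > k_r \ge 0$, the bit-reversal rule partitions the first $n$ indices into $r$ consecutive blocks $B_1, \ldots, B_r$ of sizes $2^{k_\ell}$, and each $B_\ell$ is a rotated copy of the complete set of $2^{k_\ell}$-th roots of unity, the rotation phase $\alpha_\ell$ being fixed by the bits of $n$ above position $k_\ell$. A direct computation then gives
\[ \sigma_m \;=\; \sum_{\ell \,:\, 2^{k_\ell} \mid m} 2^{k_\ell}\alpha_\ell^m, \]
so $\sigma_m$ is assembled only from blocks whose size $2^{k_\ell}$ divides $m$. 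In particular, the dominant block $B_1$ contributes to $\sigma_m$ only for $m$ a multiple of $2^{k_1} \ge n/2$, and the smaller blocks contribute on correspondingly sparser index sets.

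With this decomposition in hand, I would attack $\lambda_{\max}(G)$ and $\lambda_{\min}(G)$ together, either by writing $G = \sum_\ell G_\ell$ as a sum of positive semidefinite block contributions and bounding the spread of each plus the cross-block interactions, or by induction on $r$: the base $r=1$ (so $n$ is a power of two) gives $G = nI$ and $\kappa = 1$, and the step from $n' = 2^{k_1}$ to $n = 2^{k_1} + m$ with $m < 2^{k_1}$ tracks the spectral perturbation caused by appending a rotated van der Corput prefix of length $m$ and invokes the inductive hypothesis on that shorter prefix. The main obstacle is to make the cross-block cancellations quantitative enough to yield the precise constant $\sqrt 2$. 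Small cases such as $n = 2^k + 1$ already show that $\lambda_{\min}(G)$ can be as small as a constant (compute the $2\times 2$ principal submatrix of $G$ indexed by $\{0, 2^k\}$), so a clean bound like $\lambda_{\min}(G) \ge n/2$ is out of reach; instead one must pair a modest lower bound of the form $\lambda_{\min}(G) \ge 1$ with an upper bound $\lambda_{\max}(G) \le 2n$. Securing both bounds with the right constants depends critically on the bit-reversed phase alignment of the van der Corput ordering, which ensures that the off-diagonal contributions from different blocks --- supported at the distinct multiples of the divisors $2^{k_\ell}$ --- never reinforce constructively. The quasi-cyclic ordering of Example~\ref{ex2} fails exactly this compensation, clustering all extra knots on a single arc of $C(0,1)$ and producing the exponential blow-up of Theorem~\ref{thqsc}; verifying quantitatively that the zig-zag van der Corput reordering restores the compensation is the real work of the proof.
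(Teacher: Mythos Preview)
The paper does not supply its own proof of Theorem~\ref{thgaut}: it simply records the estimate and attributes it to Gautschi \cite[Section 4]{G90}. So there is no in-paper argument to compare against; any assessment has to be on the internal merits of your sketch and, if anything, against Gautschi's original proof.

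On those merits, what you have written is a plan, not a proof, and you say so yourself: the last sentence concedes that ``verifying quantitatively that the zig-zag van der Corput reordering restores the compensation is the real work of the proof.'' The Gram-matrix reformulation is correct and the block decomposition of the van der Corput prefix by binary digits is the right structural observation, but you have not shown either of the two bounds you identify as targets, $\lambda_{\min}(G)\ge 1$ and $\lambda_{\max}(G)\le 2n$, nor explained concretely how the bit-reversed phases $\alpha_\ell$ prevent constructive interference among the off-diagonal Toeplitz bands. Both proposed routes (the PSD splitting $G=\sum_\ell G_\ell$ and the induction on the number $r$ of binary digits) are plausible starting points, but each needs a quantitative lemma you have not stated, let alone proved. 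In short: the approach is reasonable, but the genuine gap is that the central estimate is entirely missing. If you want to close it, Gautschi's own argument in \cite{G90} proceeds more directly by bounding $\|V_{\bf s}\|$ and $\|V_{\bf s}^{-1}\|$ separately via explicit row/column norms rather than through the spectrum of $G$; that route avoids the eigenvalue localisation problem you are facing.
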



\section{Numerical Instability of Gaussian Elimination  with No Pivoting for the Matrices
of Discrete Fourier Transform}\label{sgenpv}

 
Gaussian elimination with partial pivoting,
that is, with appropriate row interchange,
is extensively used in matrix computations,
even though pivoting is a substantial burden  
in the context of the modern computer technology.
The papers \cite{PQY15} and \cite{PZ15} 
list the following problems with pivoting:
it 
interrupts the stream of arithmetic operations 
with foreign operations of comparison,
involves book-keeping, compromises data locality, 
complicates parallelization of the computations,  and
increases communication overhead and data dependence.
The user prefers to apply
Gaussian elimination with no pivoting
(hereafter 
we use the acronym
{\em  GENP}) unless 
it fails or runs into numerical problems, 
which occurs
if and only if some leading blocks 
of the input matrix $A$ are singular or  
ill-conditioned (cf. \cite[Theorem 5.1]{PQZ13}). 

These observations have motivated substantial 
interest to numerical application of GENP 
and the efforts towards
supporting  it with 
various heuristic and randomization techniques
(see \cite{PQY15} and \cite{BCD14}
for history accounts).
Our next theorem contributes 
 to the study of this
important subject.
		

\begin{theorem}\label{thgenpdft}
Assume for convenience that  $n$ is even
and let $\Omega^{(q)}$ denote
the $q\times q$ leading (that is, northwestern) block of 
the matrix $\Omega$ for $q= n/2$.
Then
$\kappa(\Omega^{(q)})\ge 2^{n/4-1}\sqrt n$.
\end{theorem}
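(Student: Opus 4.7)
The plan is to recognize $\Omega^{(q)}$ as a Vandermonde matrix and apply Corollary \ref{coninv} at a well chosen test point. Writing $\Omega^{(q)}=(\omega_n^{ij})_{i,j=0}^{q-1}=V_{\bf s}$ with $s_i=\omega_n^i$ for $i=0,1,\dots,q-1$, every knot lies on $C(0,1)$, hence $s_+=1$ and Corollary \ref{coninv} applies, giving
$$\kappa(\Omega^{(q)})\ge \frac{\sqrt q}{2}\,\max_{|f|=1}|s(f)|,\qquad s(x)=\prod_{i=0}^{q-1}(x-\omega_n^i).$$
So it suffices to exhibit a single $f$ on the unit circle for which $|s(f)|\ge 2^{(q+1)/2}$; the claimed bound $\kappa(\Omega^{(q)})\ge 2^{n/4-1}\sqrt n$ will then follow from $q=n/2$ by a one-line calculation.

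Since the $q=n/2$ knots crowd the upper semicircle of $C(0,1)$, the natural candidate is $f=-\sqrt{-1}$, antipodal to the midpoint of the arc they occupy. I would then compute $|s(-\sqrt{-1})|^2$ factor by factor using $|e^{\sqrt{-1}\alpha}-e^{\sqrt{-1}\beta}|^2=2(1-\cos(\alpha-\beta))$ with $\alpha=-\pi/2$ and $\beta=2\pi i/n=\pi i/q$. Each factor collapses to $2(1+\sin(\pi i/q))$, whence
$$|s(-\sqrt{-1})|^2\;=\;2^q\prod_{i=0}^{q-1}\bigl(1+\sin(\pi i/q)\bigr).$$
The remaining task is to bound the product above by $2$ from below.

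This product bound is the only step that requires a short argument, and I view it as the main (albeit modest) obstacle. It should follow from the symmetry $\sin(\pi(q-i)/q)=\sin(\pi i/q)$, which pairs the factors for $i$ and $q-i$. When $q$ is even, the unpaired middle factor at $i=q/2$ equals $2$ while every other factor is at least $1$, so the product is $\ge 2$ immediately. When $q$ is odd, the middle paired factor at $i=(q-1)/2$ alone contributes $(1+\cos(\pi/(2q)))^2$, which already exceeds $2$ for $q\ge 2$; all other factors exceed $1$. In either case the product is $\ge 2$.

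Combining these two estimates gives $|s(-\sqrt{-1})|\ge 2^{(q+1)/2}$, and substitution into the displayed consequence of Corollary \ref{coninv} together with $q=n/2$ yields
$$\kappa(\Omega^{(q)})\ge \sqrt q\cdot 2^{(q-1)/2}=\sqrt{n/2}\cdot 2^{n/4-1/2}=2^{n/4-1}\sqrt n,$$
as required. The only creative choice in the whole argument is the selection of the test point $f=-\sqrt{-1}$; everything else is a short direct computation and an elementary symmetry.
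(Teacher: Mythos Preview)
Your argument is correct and follows the paper's approach exactly: view $\Omega^{(q)}$ as a $q\times q$ Vandermonde matrix with knots $\omega_n^0,\dots,\omega_n^{q-1}$ on the upper semicircle, invoke Corollary~\ref{coninv}, and evaluate at the test point $f=-\sqrt{-1}$. The only difference is that the paper bounds each factor crudely by $|f-\omega_n^j|\ge\sqrt 2$ to obtain $|s(f)|\ge 2^{q/2}$, while your symmetry argument $\prod_{j=0}^{q-1}(1+\sin(\pi j/q))\ge 2$ gives the sharper $|s(f)|\ge 2^{(q+1)/2}$; this extra factor of $\sqrt 2$ is precisely what is needed to reach $2^{n/4-1}\sqrt n$ once Corollary~\ref{coninv} is applied with the correct prefactor $\sqrt q$ (the paper writes $\sqrt n$ there, which appears to be a slip).
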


\begin{proof}
As follows from Corollary \ref{coninv},
 $\kappa(\Omega^{(q)})\ge |s(f\omega_q^i)|\sqrt n/2$
where $s(x)=\prod_{j=0}^{q-1}(x-\omega_n^j)$ and where we can choose 
$f=-\sqrt {-1}$.
Under such a choice $|f-\omega_n^j|\ge \sqrt 2$ for $j=0,\dots,q-1$,
and so $|s(f)|\ge 2^{n/4}$. Hence 
$\kappa(\Omega_n^{(q)})\ge 2^{n/4}\sqrt n/2$,
 and Theorem \ref{thgenpdft}  follows.
\end{proof}

The estimates of equations (\ref{eqspqf})--(\ref{eqqcs1})
still hold if we substitute $p_q(x)=s(x)$ and $n=2q$, which  should replace equations (\ref{eqpqf}) and
$n=3q$ of the previous section, 
and so we can 
 strengthen the bound of  Theorem \ref{thgenpdft}
accordingly.

The techniques of the  proof of Theorem \ref{thgenpdft}
can be extended to reveal that all the $k\times k$ blocks 
of the unitary matrix $\frac{1}{\sqrt {n}}\Omega$ 
are badly ill-conditioned as long as the integers $k$ or $n-k$
are large. This observation can 
lead to interesting conclusion about randomized 
structured preprocessing of GENP,
studied in \cite{PQZ13}, \cite{PQY15}, \cite{PZ15},  and \cite{PZa}.
Namely, together with 
 \cite[Theorem 6.2]{PQY15} and  \cite[Theorem 9]{PZ15}
Theorem \ref{thgenpdft} implies that 
Gaussian random circulant multipliers  
support numerically stable application of GENP to the matrix $\Omega$
only with a probability near 0,
even though they have consistently supported numerical GENP 
in the extensive tests with  matrices of various other classes 
in \cite{PQZ13} and
\cite{PQY15}.

Finally, in some contrast to Theorem \ref{thgenpdft},
one can turn all
leading blocks of   the matrix $\Omega$
into well-conditioned  matrices,
by re-ordering its rows according to 
the van der Corput sequence.

\begin{theorem}\label{thdftldvd}
 Given the van der Corput sequence  of $n\times n$
Van\-der\-monde matrices $V_{\bf s}$ for $n=1,2,3,\dots$, 
the leading block of any of the matrices in the sequence
is also a matrix in the same sequence.
\end{theorem}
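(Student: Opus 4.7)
The plan is to reduce the theorem to a single structural observation about the van der Corput sequence itself, namely that it is prefix-compatible across all $n$. First I would fix notation by writing $(f_i)_{i=0}^{\infty}$ for the infinite van der Corput sequence of fractions (whose first $n$ terms are the $n$ fractions enumerated in the display preceding the theorem), setting $s_i=\exp(2\pi f_i\sqrt{-1})$, and letting $V^{(n)}=(s_i^{\,j})_{i,j=0}^{n-1}$ denote the $n$-th matrix of the sequence.

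Next I would observe that for any $1\le k\le n$, the leading $k\times k$ block of $V^{(n)}$ consists precisely of the entries $s_i^{\,j}$ for $0\le i,j\le k-1$, and therefore depends only on the first $k$ knots $s_0,\dots,s_{k-1}$ and on the powers $0,1,\dots,k-1$. Hence this block equals $(s_i^{\,j})_{i,j=0}^{k-1}$, which is literally the definition of $V^{(k)}$, the $k$-th matrix in the sequence.

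The only genuine step, which is also the \emph{main obstacle} in name only, is verifying prefix-compatibility: that the first $k$ fractions used to define $V^{(k)}$ coincide with the first $k$ fractions used to define $V^{(n)}$ for every $n\ge k$. This is immediate either from the bit-reversal description of the van der Corput sequence (the $i$-th fraction is obtained by reflecting the binary representation of $i$ across the radix point and so is determined by $i$ alone, independent of $n$) or, equivalently, from the zigzag description given in Example \ref{ex2}, in which each new term is appended to the right of all earlier terms and no earlier term is ever modified. Once this is in hand, the theorem follows with no further computation.
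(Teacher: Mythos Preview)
Your proposal is correct. The paper itself gives no proof of this theorem, treating it as an immediate observation, and your argument spells out exactly the one-line reason behind it: the van der Corput sequence is a single infinite sequence (each fraction $f_i$ depends only on $i$, e.g.\ via bit-reversal), so the first $k$ knots are the same for every $n\ge k$, whence the $k\times k$ leading block of $V^{(n)}$ is literally $V^{(k)}$. One small citation slip: the ``zigzag'' description you cite is not in Example~\ref{ex2} (which concerns the quasi-cyclic sequence) but in the paragraph immediately preceding Theorem~\ref{thgaut}.
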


Combine this observation with Theorem \ref{thgaut}
and conclude that all leading blocks of a Van\-der\-monde matrix 
defined by a
van der Corput sequence of knots are nonsingular and well-conditioned,
and so
application of GENP to such a Van\-der\-monde matrix is numerically stable.


\section{Low-Rank Approximation of Cauchy and CV Matrices}\label{scv1}


Next,
under some mild assumptions on the knots $s_0,\dots,s_{m-1}$,
 we deduce an exponential low bound on the norm  of the inverse of 
a nonsingular  CV matrix $C_{{\bf s},f}$, which is the
reciprocal of its 
smallest singular value,
$||C_{{\bf s},f}^{-1}||=1/\sigma_{n}(C_{{\bf s},f})$.
At first we deduce such a bound for a submatrix and
then  extend it to the  matrix itself by applying Corollary   \ref{coappnd}.
In the next section, by extending this
estimate, we prove our alternative exponential 
lower bound on the norm $||V_{\bf s}^{-1}||$ 
and the condition number $\kappa(V_{\bf s})$ of  matrix $V_{\bf s}$,
which holds if the knot sets of the matrix 
$V_{\bf s}$
deviates substantially from the
evenly spaced distribution on the circle $C(0,1)$.

\begin{definition}\label{defss} (See  \cite[page 1254]{CGS07}.)

Two complex points $s$ and $t$
are $(\eta,c)$-{\em separated}, 
for $\eta>1$ and a complex {\em center} $c$,
if $|\frac{t-c}{s-c}|\le 1/\eta$.

Two   complex sets  $\mathcal S$ and 
$\mathcal T$ are $(\eta,c)$-{\em separated} 
if  every pair of points  $s\in \mathcal S$
and $t\in \mathcal T$ 
is $(\eta,c)$-separated.
\end{definition}


\begin{theorem}\label{thss0}  (Cf. \cite[Section 2.2]{CGS07},
\cite[Theorem 29]{P15}.)
Let  $m$ and $l$ denote 
 a pair of positive integers and 
let 
$\mathcal S_m=\{s_0,\dots,s_{m-1}\}$ and 
$\mathcal T_l=\{t_0,\dots,t_{l-1}\}$
denote  two sets 
that are $(\eta,c)$-separated 
for a pair of 
 real $\eta>1$ and  complex $c$.
Define 
the Cauchy matrix
$C=(\frac{1}{s_i-t_j})_{i,j=0}^{m-1,l-1}$. 
Write
 $\delta=\delta_{c,\mathcal S}=\min_{i=0}^{m-1} |s_i-c|$.
Then
 
$$1/\sigma_{\rho}(C)\ge (1-\eta)\eta^{\rho-1}\delta~
{\rm for~all~positive~integers}~\rho.$$
\end{theorem}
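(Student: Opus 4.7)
The plan is to apply the Ekkart--Young theorem (Theorem \ref{thsgm}): a lower bound on $1/\sigma_\rho(C)$ is equivalent to an upper bound on $\sigma_\rho(C)$, which in turn is equivalent to exhibiting a rank-$(\rho-1)$ matrix $\tilde C$ that is close to $C$ in spectral norm. The natural construction is the truncated Taylor/geometric-series expansion of the Cauchy kernel about the common center $c$.

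First, I would rewrite
\begin{equation*}
\frac{1}{s_i-t_j}=\frac{1}{(s_i-c)-(t_j-c)}=\frac{1}{s_i-c}\cdot\frac{1}{1-(t_j-c)/(s_i-c)}
\end{equation*}
and expand the second factor as a geometric series in $(t_j-c)/(s_i-c)$, keeping the first $\rho-1$ terms. The truncated part defines the matrix $\tilde C=UV^T$ with $U_{i,k}=(s_i-c)^{-(k+1)}$ and $V_{j,k}=(t_j-c)^k$ for $k=0,\dots,\rho-2$; by construction $\rank(\tilde C)\le \rho-1$. The residual matrix $R=C-\tilde C$ has entries
\begin{equation*}
R_{ij}=\frac{1}{s_i-t_j}\left(\frac{t_j-c}{s_i-c}\right)^{\rho-1}.
\end{equation*}

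Next, I would bound $R$ entrywise using the separation hypothesis. Definition \ref{defss} gives $|(t_j-c)/(s_i-c)|\le 1/\eta$, while the reverse triangle inequality together with $|s_i-c|\ge\delta$ yields $|s_i-t_j|\ge |s_i-c|-|t_j-c|\ge \delta(1-1/\eta)$. Combining these two estimates gives the uniform entrywise bound $|R_{ij}|\le \eta^{-(\rho-1)}/(\delta(1-1/\eta))$. To pass from this entrywise bound to a spectral-norm bound, I would factor $R=D_1 C_0 D_2$, where $D_1=\diag((s_i-c)^{-(\rho-1)})$, $D_2=\diag((t_j-c)^{\rho-1})$, and $C_0$ is itself a Cauchy matrix with the same knot sets; this lets one control $\|R\|$ in terms of the product of the two diagonal norms and a modest factor from $C_0$, rather than through a crude $\sqrt{ml}$ factor coming from $\|R\|_F$.

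Finally, Ekkart--Young gives $\sigma_\rho(C)\le \|R\|$, so taking reciprocals produces a lower bound on $1/\sigma_\rho(C)$ of the form $(1-1/\eta)\,\eta^{\rho-1}\delta$, which is the content of the theorem (up to the evident typographical interpretation of the coefficient $(1-\eta)$ displayed in the statement as $1-1/\eta$, noting that $\eta>1$ forces $(1-\eta)<0$). The main obstacle is to avoid losing a polynomial factor in $m$ and $l$ when converting the uniform entrywise control on $R$ into a spectral-norm bound; the factorization $R=D_1C_0D_2$ together with the observation that the two diagonal factors already carry the entire geometric decay in $\eta$ is what lets the final bound be independent of the dimensions, leaving only the stated $\delta$, $\eta$, and $\rho$ dependence.
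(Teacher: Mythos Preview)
The paper does not supply its own proof of Theorem~\ref{thss0}; the result is quoted from \cite[Section 2.2]{CGS07} and \cite[Theorem 29]{P15} and then applied. Your plan---truncate the geometric expansion of $1/(s_i-t_j)$ about the center $c$ to produce a rank-$(\rho-1)$ approximant $\tilde C$, and then invoke Ekkart--Young on the remainder $R=C-\tilde C$---is exactly the argument those references use, so at the level of strategy your proposal coincides with the intended proof.

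Two points deserve correction. First, regarding the typo: from the application in the proof of Theorem~\ref{thss} (substitute $\delta=\eta r$ there and recover the stated bound $\eta^{\rho}(\eta-1)r$), the intended coefficient in Theorem~\ref{thss0} is $(\eta-1)$, not the $(1-1/\eta)$ you propose. Second, and more importantly, your closing claim that the factorization $R=D_1C_0D_2$ renders the bound dimension-free does not hold up. Here $C_0$ is $C$ itself, so the factorization gives only
\[
\sigma_\rho(C)\le \|R\|\le \|D_1\|\,\|C\|\,\|D_2\|=\eta^{-(\rho-1)}\sigma_1(C),
\]
a bound on the \emph{ratio} $\sigma_1(C)/\sigma_\rho(C)$; to bound $\sigma_\rho(C)$ itself you still need an upper bound on $\|C\|$, and that must scale with $m$ and $l$ (cluster all the $s_i$ near one point and all the $t_j$ near another, and $\|C\|$ is of order $\sqrt{ml}\,$). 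Indeed the displayed inequality cannot be literally dimension-free already at $\rho=1$, where it would assert $\|C\|\le 1/((\eta-1)\delta)$. The cited sources carry such a polynomial factor in the block size, which is harmless for the exponential-in-$\rho$ conclusions drawn in Sections~\ref{scv1}--\ref{svndbd}; you should accept that factor rather than try to argue it away.
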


We  
 apply the theorem to 
any knot set
$\mathcal S_m$ and
to some specific knot sets $\mathcal T_l$.

\begin{theorem}\label{thss} 
Let us be given 
a complex  $f$  such that $|f|=1$ and
two knot sets $\mathcal S_m=\{s_0,\dots,s_{m-1}\}$
and $\mathcal T_n=\{t_0,\dots,t_{n-1}\} $
where
$t_j=f\omega^j$, for $j=0,\dots,n-1$,  
$\omega=\exp(2\pi \sqrt{-1}/n)$.
Define
 the Cauchy matrix
$C_{m,n}=(\frac{1}{s_i-t_j})_{i\in \mathcal S_{m}, j\in \mathcal T_n}$.

Fix  two integers
$j'$ and $j''$ in the range $[0,n-1]$ such that
$0<l= j''-j'+1\le n/2$ and 
define 
the subset 
 $\mathcal T_{j',j''}=\{f\omega^j\}_{j=j'}^{j''}$ of the set $\mathcal T_n$.


Let $c=0.5(t_{j'}+t_{j''})$
denote the midpoint of the line interval 
with the endpoints $t_{j'}$ and $t_{j''}$
and let  $r=|c-t_{j'}|=|c-t_{j''}|=\sin((j''-j')\frac{\pi}{n})$ 
denote the distance from this midpoint to the points 
$t_{j'}$ and $t_{j''}$.
(Note that 
$(j''-j')\frac{\pi}{n}$ is a close upper bound on $r$ for large integers $n$.) 

Fix a constant $\eta>1$ and partition the set $\mathcal S_m$
into the two subsets $\mathcal S_{m,-}$ and $\mathcal S_{m,+}$
of cardinalities $m_-$ and $m_+$, respectively,
such that $m=m_-+m_+$, the knots of $\mathcal S_{m,-}$ lie in  
the open disc
$D(c,\eta r)=\{z:~|z-c|< \eta r\}$,
and the knots of $\mathcal S_{m,+}$ lie in the exterior of that disc.
 
(i) Then the two sets $\mathcal S_{m,+}$ and $\mathcal T_{j',j''}$
are $(c,\eta)$-separated and

\medskip

(ii) $1/\sigma_{\rho+m_-+n-l}(C_{m,n}) \ge \eta^{\rho}(\eta-1)r$
for all positive integers $\rho$.
\end{theorem}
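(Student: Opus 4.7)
The plan is to handle part (i) by a direct geometric argument about the arc and its chord, and then to reduce part (ii) to Theorem \ref{thss0} via a submatrix of $C_{m,n}$ combined with the singular value interlacing provided by Corollary \ref{coappnd}.

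For part (i), I would first verify that every point of $\mathcal T_{j',j''}$ lies in the closed disc $\overline{D}(c,r)$. The points $t_{j'},t_{j'+1},\dots,t_{j''}$ lie on an arc of $C(0,1)$ whose chord has midpoint $c$ and half-length $r=\sin((j''-j')\pi/n)$. Writing $t_{j'}=e^{i(\bar\theta-\alpha)}$, $t_{j''}=e^{i(\bar\theta+\alpha)}$ with $2\alpha=(j''-j')\tfrac{2\pi}{n}$, one has $c=e^{i\bar\theta}\cos\alpha$, and a one-line computation shows that $|e^{i(\bar\theta+\phi)}-c|^{2}=1-2\cos\phi\cos\alpha+\cos^{2}\alpha$ is non-increasing in $\cos\phi$ and maximized on $|\phi|\le\alpha$ at $\phi=\pm\alpha$, where the value is $\sin^{2}\alpha=r^{2}$. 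Hence $|t_j-c|\le r$ for every $j\in\{j',\dots,j''\}$. Since by definition $|s-c|\ge\eta r$ for every $s\in\mathcal S_{m,+}$, the ratio $|t-c|/|s-c|\le r/(\eta r)=1/\eta$ for all such pairs, giving $(c,\eta)$-separation.

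For part (ii), let $C'$ denote the $m_+\times l$ submatrix of $C_{m,n}$ obtained by keeping only the rows indexed by $\mathcal S_{m,+}$ and the columns indexed by $\mathcal T_{j',j''}$; this is again a Cauchy matrix. By part (i) the knot sets of $C'$ are $(c,\eta)$-separated, and moreover $\delta_{c,\mathcal S_{m,+}}\ge\eta r$. Applying Theorem \ref{thss0} to $C'$ with the parameter $\delta=\eta r$ therefore yields
\begin{equation*}
1/\sigma_{\rho}(C')\ \ge\ (\eta-1)\eta^{\rho-1}\cdot\eta r\ =\ (\eta-1)\eta^{\rho}r
\end{equation*}
for every positive integer $\rho$. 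The matrix $C_{m,n}$ is obtained from $C'$ by appending the $m_-$ rows indexed by $\mathcal S_{m,-}$ and the $n-l$ columns indexed by $\mathcal T_n\setminus\mathcal T_{j',j''}$. Two successive applications of Corollary \ref{coappnd} (once for the rows, once for the columns) give $\sigma_{\rho}(C')\ge\sigma_{\rho+m_-+n-l}(C_{m,n})$, and inverting this inequality and chaining with the bound above produces exactly the claim $1/\sigma_{\rho+m_-+n-l}(C_{m,n})\ge\eta^{\rho}(\eta-1)r$.

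The only nontrivial step is the geometric claim in part (i) that the chord midpoint $c$ lies within distance $r$ of every intermediate arc point; everything else is bookkeeping of indices. A secondary concern is reconciling the sign of the constant in the statement of Theorem \ref{thss0} (written as $(1-\eta)$ in the excerpt but clearly intended to be positive, namely $(\eta-1)$, since $\eta>1$); I would use the positive form, which is consistent with the conclusion of part (ii) and with the source references \cite{CGS07}, \cite{P15}.
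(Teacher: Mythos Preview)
Your proposal is correct and follows essentially the same route as the paper: separation in part (i) is established geometrically (the paper states in one line that the maximal distance from $c$ to $\mathcal T_{j',j''}$ equals $r$ and that $\mathcal S_{m,+}$ lies outside the disc of radius $\eta r$, whereas you supply the supporting chord-midpoint computation), and part (ii) is obtained by applying Theorem~\ref{thss0} to the $m_+\times l$ submatrix with $\delta=\eta r$ and then invoking Corollary~\ref{coappnd} twice, first for the $m_-$ appended rows and then for the $n-l$ appended columns. Your remark about the sign $(1-\eta)$ versus $(\eta-1)$ in Theorem~\ref{thss0} is also on point; the positive form is what is used.
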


\begin{proof} 
Part (i) follows because 
${\rm distance}(c,\mathcal T_{j',j''})=r$
and because ${\rm distance}(c,\mathcal S_{m,+})\ge \eta r$
by virtue of the definition of the set $\mathcal S_{m,+}$. 

Now define the two following submatrices of the matrix $C_{m,n}$,
 $$C_{m,j',j''}=\Big (\frac{1}{s_i-t_j}\Big )_{i\in \mathcal S_{m}, j\in \mathcal T_{j',j''}}~
{\rm and}~C_{m_+,j',j''}=\Big (\frac{1}{s_i-t_j}\Big )_{i\in \mathcal S_{m,+}, j\in \mathcal T_{j',j''}}.$$

Apply Theorem \ref{thss0} to the matrix $C=C_{m_+,j',j''}$  and
$\delta=\eta r$ and obtain that
$$1/\sigma_{\rho}(C_{m_+,j',j''}) \ge \eta^{\rho}(\eta-1)r~{\rm  
~for~all~positive~integers}~\rho.$$

Combine this bound with Corollary   \ref{coappnd} applied
 for $k=m_-$ 
and obtain that
$$1/\sigma_{\rho+m_-}(C_{m,j',j''})\ge \eta^{\rho}(\eta-1)r
~{\rm  
~for~all~positive~integers}~\rho.$$

Combine  the latter bound with Corollary   \ref{coappnd} applied
 for $k=n-l$ 
and obtain part (ii) of the theorem.
\end{proof} 


\begin{corollary}\label{coss1}
Under the assumptions of Theorem \ref{thss}, let $m=n$ and $\bar \rho=l-m_->0$.
Then $C_{{\bf s},f}=C_{m,n}$ and

$$||C_{{\bf s},f}^{-1}||=1/ \sigma_{n}(C_{{\bf s},f})\ge \eta^{\bar\rho}(\eta-1)r.$$
\end{corollary}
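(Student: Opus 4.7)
The plan is to apply part (ii) of Theorem \ref{thss} essentially verbatim, choosing the free parameter $\rho$ so that the singular-value index lands exactly on $n$. Since the hypotheses of Theorem \ref{thss} are assumed to hold with $m=n$, and the knots $t_j=f\omega^j$ ($j=0,\dots,n-1$) that define $C_{m,n}$ are precisely the ones defining the CV matrix, the two matrices coincide: $C_{m,n}=C_{{\bf s},f}$. In particular $C_{{\bf s},f}$ is square of order $n$, and $\|C_{{\bf s},f}^{-1}\|=1/\sigma_n(C_{{\bf s},f})$ whenever it is nonsingular, which is the quantity to be bounded.

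Theorem \ref{thss}(ii) supplies the family of inequalities
\[
1/\sigma_{\rho+m_-+n-l}(C_{m,n}) \ge \eta^{\rho}(\eta-1)r
\qquad \text{for every positive integer } \rho.
\]
To extract a lower bound on $1/\sigma_n$ from this family I would solve $\rho+m_-+n-l=n$, which forces $\rho=l-m_-=\bar\rho$. The standing assumption $\bar\rho>0$ in the corollary is precisely what makes this choice a positive integer, hence admissible. Plugging $\rho=\bar\rho$ into the displayed inequality gives
\[
1/\sigma_n(C_{{\bf s},f}) \;\ge\; \eta^{\bar\rho}(\eta-1)\,r,
\]
which is the claimed bound.

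There is no real obstacle; the entire content of the corollary is the bookkeeping observation that the index shift $m_-+n-l$ in Theorem \ref{thss}(ii) can be absorbed by choosing $\rho=\bar\rho$, and that the hypothesis $\bar\rho>0$ guarantees this choice is legal. The only point worth flagging for the reader is that the positivity of $\bar\rho$ is exactly the condition needed so that one does not exceed the valid range of indices when appealing to Theorem \ref{thss}(ii), and that nonsingularity of $C_{{\bf s},f}$ (needed to identify $1/\sigma_n$ with $\|C_{{\bf s},f}^{-1}\|$) follows because the knot sets $\mathcal{S}_m$ and $\mathcal{T}_n$ are disjoint under the separation hypothesis.
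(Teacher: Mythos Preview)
Your argument is correct and is precisely the one the paper intends: the corollary is stated without proof because it follows from Theorem \ref{thss}(ii) by the substitution $\rho=\bar\rho=l-m_-$, which makes the singular-value index equal to $n$. The only imprecision is your final clause---the $(\eta,c)$-separation hypothesis concerns only the subsets $\mathcal{S}_{m,+}$ and $\mathcal{T}_{j',j''}$, not the full knot sets, so nonsingularity of $C_{{\bf s},f}$ should instead be read off from the Cauchy determinant formula (\ref{eqdet}) once all $2n$ knots are distinct; this does not affect the main inequality.
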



\section{A Typical Large Van\-der\-monde Matrix is Badly Ill-conditioned}\label{svndbd}


Combine part (i) of Theorem \ref{thninv} and
 Corollary
   \ref{coss1} and obtain 
 the following result.

\begin{corollary}\label{covnd1}
Under the assumptions of  Corollary \ref{coss1}, let $s_+\le 1$
for $s_+$ of equation (\ref{eqcond}). Then

$$\kappa(V_{\bf s})\ge  \eta^{\bar \rho}(\eta-1)r\sqrt n/2.$$
\end{corollary}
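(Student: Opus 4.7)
The plan is to use Theorem \ref{thninv}, which already gives a lower bound on $\kappa(V_{\bf s})$ of the form $\sqrt n\,\|C_{{\bf s},f}^{-1}\|/\max_{i}|s_i^n-f^n|$. The right-hand side has two ingredients: the norm of the inverse CV matrix in the numerator, and the diagonal scaling factor $\max_i|s_i^n-f^n|$ in the denominator. Corollary \ref{coss1} handles the numerator directly, giving $\|C_{{\bf s},f}^{-1}\|\ge \eta^{\bar\rho}(\eta-1)r$, so everything reduces to producing a clean upper bound on the denominator.

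For the denominator, I would invoke the two standing assumptions. Since $|f|=1$, we have $|f^n|=1$. Since $s_+\le 1$, the triangle inequality gives $|s_i^n-f^n|\le |s_i|^n+|f|^n\le s_+^n+1\le 2$ for every $i$, hence $\max_{i=0}^{n-1}|s_i^n-f^n|\le 2$.

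Plugging this into Theorem \ref{thninv} yields
\[
\kappa(V_{\bf s})\;\ge\;\frac{\sqrt n\,\|C_{{\bf s},f}^{-1}\|}{\max_i|s_i^n-f^n|}\;\ge\;\frac{\sqrt n\,\|C_{{\bf s},f}^{-1}\|}{2},
\]
and substituting the bound from Corollary \ref{coss1} gives the claimed inequality $\kappa(V_{\bf s})\ge \eta^{\bar\rho}(\eta-1)r\sqrt n/2$. There is no real obstacle here: the proof is a direct assembly of the two previously established inequalities, with the only mild observation being that the hypothesis $s_+\le 1$ together with $|f|=1$ controls the denominator by the crude constant $2$. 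If one wanted to sharpen the constant (say from $2$ to $1+s_+^n$), the same argument would apply without any additional work.
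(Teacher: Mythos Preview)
Your proof is correct and follows exactly the approach the paper intends: the paper's own argument is the single sentence ``Combine part (i) of Theorem~\ref{thninv} and Corollary~\ref{coss1},'' and you have simply spelled out the one detail left implicit there, namely that $s_+\le 1$ and $|f|=1$ force $\max_i|s_i^n-f^n|\le 2$.
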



The corollary shows that, under its mild assumptions,
 the condition number 
$\kappa(V_{\bf s})$ of 
an  $n\times n$ Van\-der\-monde matrix 
$V_{\bf s}$ grows exponentially in $\bar \rho$, 
and thus also in $n$  
if $\bar \rho$ grows proportionally to $n$.

To specify an implication, 
fix any real $\eta>1$ and any pair
of integers $j'$ and $j''$ in the range $[0,n-1]$
such that $0<j''-j'\le n/2$. Then 
the disc $D(c,r)$ contains precisely 
$l=j''-j'+1$ knots from the set $\mathcal T$,
and $m_-$ denotes the number of knots of the set 
$\mathcal S$ in the disc  $D(c,\eta r)$.
Hence Corollary \ref{covnd1} implies that
$\kappa(V_{\bf s})$ is exponential in $n$
unless $l-m_-=o(n)$, for all such triples 
$(\eta,j',j'')$ with $l=j''-j'+1$ of order $n$,
 that is, unless the number of knots $s_i$ in the 
disc $D(c,\eta r)$ exceeds,
 matches or nearly matches the number of knots $t_j$ 
in the disc $D(c,r)$ for all such triples. 
  Clearly, this property is  only satisfied
 for all such triples $(\eta,j',j'')$ if
 the associated knot set $\mathcal S$ is more or less evenly 
spaced  on or about the unit circle $C(0,1)$.


\section{Numerical Tests}\label{ststs}


Numerical tests have been performed by Liang Zhao 
in the Graduate Center of the City University of New York.
He has run them on a Dell server by using Windows system and MATLAB R2014a
(with the IEEE standard double precision).


Table \ref{VandCond} displays 
the condition numbers $\kappa(V_{\bf s})$ of $n\times n$ Van\-der\-monde matrices,
 with the knots $s_i = \omega_n^i$ being the $n$th 
roots of 1, for $i = 0, 1,\dots, n-2$,  and
with the knot $s_{n-1}$  in the range from 1.14 to 10.
For comparison the table also  displays
 the lower estimates of part (i)
of Corollary \ref{cocndvceasy}
 in the case of $m=n$. 
By setting   $s_{n-1}=\omega_n^{n-1}$ we would have 
$\kappa(V_{\bf s})=1$, but with  the growth of the value $|s_{n-1}|$
and even with the growth of $n$ for $s_{n-1}$ fixed at 1.14,
 the matrix was quickly becoming ill-conditioned.

Table \ref{VandCond1} displays the 
condition numbers  of $n\times n$ Van\-der\-monde matrices defined 
by  various pairs of integers $k$ and $n$ and the knots $s_0,\dots, s_{n-1}$
such that
 $s_i = \omega_{n-k}^i$ are the $(n-k)$th 
roots of 1, for  $i = 0, 1,\dots, n-k-1$, and 
 $s_{n-k+i} = \rho\omega_{k}^i$ are the $k$th 
roots of 1  scaled by $\rho$, for  $i = 0, 1,\dots, k$ and $\rho=3/4,1/2$.
For comparison the table also  displays
 the lower estimates 
$\kappa_-= |V_{\bf s}|\nu^{k-1}/(\sqrt k\max \{k,\nu/(\nu-1)\})$ of part (ii)
of Corollary \ref{cocndvceasy}, for $\nu=1/\rho$.

\begin{table}[h]
\caption{Condition Numbers of Van\-der\-monde Matrices with a Single Absolutely Large Knot}
\label{VandCond}
\begin{center}
\begin{tabular}{|c|c|c|c|}
\hline
n 	&	 $s_{n-1}$ 	& $\kappa(V_{\bf s})$ & $s_+^{n-1}/\sqrt n$	\\ \hline
64	&	1.14E+00	&	3.36E+03	&	4.98E+02	\\ \hline
64	&	1.56E+00	&	6.88E+11	&	2.03E+11	\\ \hline
64	&	3.25E+00	&	4.62E+32	&	2.22E+31	\\ \hline
64	&	1.00E+01	&	5.47E+63	&	1.25E+62	\\ \hline
128	&	1.14E+00	&	1.08E+07	&	1.60E+06	\\ \hline
128	&	1.56E+00	&	3.24E+32	&	3.64E+23	\\ \hline
128	&	3.25E+00	&	6.40E+76	&	9.03E+63	\\ \hline
128	&	1.00E+01	&	1.30E+130	&	8.84E+125	\\ \hline
256	&	1.14E+00	&	1.57E+14	&	2.33E+13	\\ \hline
256	&	1.56E+00	&	6.61E+62	&	1.66E+48	\\ \hline
256	&	3.25E+00	&	2.89E+132	&	2.12E+129	\\ \hline
256	&	1.00E+01	&	1.18E+265	&	6.25E+253	\\ \hline

\end{tabular}
\end{center}
\end{table}

\begin{table}[h]
\caption{Condition Numbers of Van\-der\-monde Matrices 
with $k$ Absolutely Small Knots}
\label{VandCond1}
\begin{center}
\begin{tabular}{|c|c|c|c|}
\hline
& &    $\rho=3/4$   &  $\rho=1/2$ \\ \hline
n 	&	 $k$ 	 & $\kappa(V_{\bf s}),\kappa_-$
& $\kappa(V_{\bf s}),\kappa_-$	\\ \hline
64	&	8	 	&	4.04E+01, 7.14E+00 	&	6.90E+02, 2.44E+02 	\\ \hline
64	&	16		&	2.71E+02, 4.78E+01 	&	1.19E+05, 4.19E+04 	\\ \hline
64	&	32	 	&	1.71E+04, 3.02E+03 	&	4.91E+09, 1.74E+09 	\\ \hline
128	&	8	 	&	5.85E+01, 1.03E+01 	&	1.00E+03, 3.53E+02 	\\ \hline
128	&	16	 	&	4.03E+02, 7.13E+01 	&	1.77E+05, 6.24E+04 	\\ \hline
128	&	32		&	2.70E+04, 4.77E+03 	&	7.77E+09, 2.75E+09 	\\ \hline
256	&	8		&	8.38E+01, 1.48E+01 	&	1.43E+03, 5.06E+02 	\\ \hline
256	&	16	 	&	5.85E+02, 1.03E+02 	&	2.56E+05, 9.05E+04 	\\ \hline
256	&	32		&	4.02E+04, 7.11E+03 	&	1.16E+10, 4.09E+09 	\\ \hline

\end{tabular}
\end{center}
\end{table}


Table \ref{tabquasi} displays the condition numbers 
$\kappa(V_{\bf s})$ of the $3q\times 3q$ 
Van\-der\-monde Matrices defined by quasi-cyclic sequence 
of knots for $q=2^{k}$ and integers $k$.
The table also displays
 the lower bounds, $\kappa$ and $\kappa'$,  on these numbers 
calculated based on equations (\ref{eqqcs1}) and (\ref{eqqcslim}),
respectively.

\begin{table}[h]
\caption{Condition Numbers of Van\-der\-monde Matrices 
defined by quasi-cyclic sequence 
of knots}
\label{tabquasi}
\begin{center}
\begin{tabular}{|c|c|c|c|c|}
\hline
n	&	q 	& $\kappa(V_{\bf s})$ & $\kappa$	&	$\kappa'$	\\ \hline				
12  & 4 & 2.16e+01 & 1.96e+01 & 1.03E+01\\ \hline 
24  & 8 & 1.50e+03 & 2.22e+02 & 1.06E+02\\ \hline 
48  & 16 & 1.16e+07 & 2.01e+04 & 1.13E+04\\ \hline 
96  & 32 & 9.86e+14 & 1.16e+08 & 1.27E+08\\ \hline


\end{tabular}
\end{center}
\end{table}

Table \ref{LeadDFTBlock} displays the condition numbers 
$\kappa(\Omega^{(q)})$ of the $q\times q$ leading blocks 
of the $n\times n$ DFT matrices  $\Omega=(\omega_n^{ij})_{i,j=0}^{n-1}$, for
$n = 2q=8, 16, 32, 64$.  
For the same pairs of $n$ and $q$, the table also displays
 the lower bounds, $\kappa_-$ and $\kappa'_-$,  on these numbers
calculated based on Theorem \ref{thgenpdft} and  equation (\ref{eqqcslim})
for $s(x)=p_q(x)$, 
respectively.

 The test results show  that the  lower bounds based on
equations (\ref{eqqcs1}) and  (\ref{eqqcslim}) and even the ones based on 
Theorem \ref{thgenpdft}
grew very fast, but the
condition numbers grew even faster.

\begin{table}[h]
\caption{Condition Numbers of the Leading Blocks of DFT matrices}
\label{LeadDFTBlock}
\begin{center}
\begin{tabular}{|*{6}{c|}}
\hline
n	&	 q	&  $\kappa(\Omega_n^{(k)})$ 	&	$\kappa_-$ 	& $\kappa'_-$	\\ \hline
8	&	4	& 	1.53E+01	& 	8.00E+00	& 	1.03E+01	\\ \hline
16	&	8	&	1.06E+03	&	4.53E+01	&	1.06E+02	\\ \hline
32	&	16	&	8.18E+06	&	1.02E+03	&	1.13E+04	\\ \hline
64	&	32	&	8.44E+14	&	3.71E+05	&	 1.27E+08	\\ \hline

\end{tabular}
\end{center}
\end{table}

 Table \ref{GENP} shows the mean and standard deviation of the relative 
residual norms $rn = ||\Omega_k {\bf x} - {\bf b}||/||{\bf b}||$
when 
GENP was applied to the linear systems of $n$ equations   
$\Omega_n{\bf x}={\bf b}$, for the $n\times n$ DFT matrix $\Omega_n$, 
vectors ${\bf b}$ of dimension $n$ filled with i.i.d. standard Gaussian
random variables,
$n = 64, 128, 256, 512, 1024$,
and
 100 linear systems $\Omega_n {\bf x} = {\bf b}$ have been 
solved for each $n$.  
The tests have confirmed that the relative residual norms
 grew very fast for these inputs as $n$ increased.

\begin{table}[h]
\caption{GENP with DFT Matrices: Relative 
Residual Norms}
\label{GENP}
\begin{center}
\begin{tabular}{|c|c|c|}
\hline
{\bf n}	&	\bf mean	&	\bf std	       \\ \hline
16	&	8.88E-14	&	2.75E-14	\\ \hline
32	&	8.01E-10	&	3.56E-10	\\ \hline
64	&	5.31E-03	&	1.83E-03	\\ \hline
128	&	5.00E+00	&	2.15E+00	\\ \hline
256	&	7.87E+02	&	5.20E+02	\\ \hline
512	&	7.05E+03	&	4.44E+03	\\ \hline
1024	&	6.90E+04	&	3.26E+04	\\ \hline

\end{tabular}
\end{center}
\end{table}



\section{Conclusions}\label{sconc}


Our estimates for the condition numbers 
 of $n\times n$ Van\-der\-monde matrices $V_{\bf s}=(s_i^j)_{i,j=0}^{n-1}$,
having complex knots $s_0,\dots,s_{n-1}$,
provide some missing formal support for the well known 
empirical observation
that
such matrices tend to be badly ill-conditioned
already for moderately large integers $n$.
Our results also indicate that all the
 exceptions are  narrowed to the  Van\-der\-monde matrices
(such as  
the matrices of the discrete Fourier transform)
whose knots are more or less equally spaced
on or about the unit circle $\{x:~|x=1|\}$.  
Our auxiliary  inversion formulae 
for  Cauchy and Van\-der\-monde matrices
may be of 
independent interest.

We began our study with readily deducing from the Ekkart--Young Theorem that  
the condition number $\kappa(V_{\bf s})$
is
exponential in $n$
unless all knots $s_0,\dots,s_{n-1}$
lie in or near the unit disc $\{x:~|x|\le 1\}$
and furthermore unless they lie
mostly on or near the  unit circle $C(0,1)$.

Then, by applying more involved techniques, based on 
the  Van\-der\-monde--Cauchy transformation of matrix structure,
our formulae for the  inversion of a Cauchy matrix,
their extension to Van\-der\-monde matrix inversion,
and the techniques of low-rank approximation of Cauchy matrices,
 we prove our exponential lower bounds on  
the condition numbers 
of Van\-der\-monde matrices
$V_{\bf s}$,
which hold unless   
 the  knots $s_0,\dots,s_{n-1}$
are more or less equally spaced on or near the 
unit circle $C(0,1)$.

Our test results are in a rather good accordance 
with our estimates. 
One can try to narrow the remaining gap between
the  estimated
and observed values of the condition number 
$\kappa(V_{\bf s})$
by varying the  parameters $t_j$ and $f$ that we substitute into our basic
equation (\ref{eqinvc}).

All our estimates hold for the transpose of a Van\-der\-monde matrix as well.
Furthermore we can extend our upper bounds on the smallest positive 
singular value $\sigma_{n}(V_{\bf s})=1/||V_{\bf s}||$ to the upper bounds 
$$\sigma_{n}(V)\le \sigma_{n}(V_{\bf s})~\sum_{j=1}^d||C_j||~||D_j||~{\rm and}~
\sigma_{n}(C)\le \sigma_{n}(C_{\bf s,t})~\sum_{j=1}^d||D'_j||~||D''_j||$$
on the smallest positive singular values $\sigma_{n}(V)$ and 
$\sigma_{n}(C)$, respectively, of the 
 ``Van\-der\-monde-like" and ``Cauchy-like" matrices of the important classes
$V=\sum_{j=1}^d~D_jV_{\bf s}C_j$ and 
$C=\sum_{j=1}^d~D'_jC_{\bf s,t}D''_j$, respectively,
for a bounded positive integer $d$,
diagonal matrices $D_j$, $D_j'$, and $D''_j$, 
and factor-circulant (structured) matrices $C_j$,
$j=1,\dots,d$ (cf. \cite[Examples 1.4.1 and 4.4.6]{P01}).

Most of our results can be readily extended to 
rectangular  Van\-der\-monde, Cauchy, ``Van\-der\-monde-like",
 and  ``Cauchy-like" matrices, and
a natural research challenge is their extension to
 polynomial  Van\-der\-monde matrices
$V_{\bf P,s}=(p_j(x_i))_{i,j=0}^{m-1,n-1}$
where ${\bf P}=(p_j(x))_{j=0}^{n-1}$
is any basis in the space of polynomials of degree less than $n$,
for example, the basis made up of Chebyshev polynomials.
Equation  (\ref{eqinvc}) is extended to this 
case as follows (cf. \cite[equation (3.6.8)]{P01}),

$$C_{\bf s,t}=\Big (\frac{1}{s_i-t_j}\Big )_{i,j=0}^{m-1,n-1}=\diag(t(s_i)^{-1})_{i=0}^{m-1}V_{\bf P,s}V^{-1}_{\bf P,t}\diag(t'(t_j))_{j=0}^{n-1}.$$
Here $t(x)=\prod_{j=0}^{n-1}(x-t_j)$
and ${\bf t}=(t_j)_{j=0}^{n-1}$ denotes the coefficient vector 
of the polynomial $t(x)-x^n$.
The decisive step could be the selection of a proper set of knots $\mathcal T=\{t_j\}_{j=0}^{n-1}$,
 which would enable us to reduce the estimation of 
the condition number of 
a polynomial Van\-der\-monde matrix $V_{\bf P,s}$
to the same task for a
Cauchy matrix $C_{\bf s,t}$. \cite[Sections 5 and 6]{G90}
might give us some initial guidance for the selection of the knot set, 
that would extend
the set of equally spaced knots on the unit circle, used 
in the case of Van\-der\-monde matrices. Then it would remain to extend 
our estimates for CV matrices to the new subclass of Cauchy matrices. 


{\bf Acknowledgements:} 
  This work has been supported by NSF Grant CCF--1116736 
and PSC CUNY Award 67699-00 45. 


\end{document}